\newcommand\PG{{\rm PG}}
\newcommand\PGL{{\rm PGL}}
\newcommand\AG{{\rm\mbox{AG}}}
\newcommand\GF{{\rm\mbox{GF}}}
\renewcommand{\P}{\mathcal{P}}
\newcommand\bs{\backslash}
\newcommand\st{:}
\newcommand\A{{\cal A}}
\newcommand{\Label}{\label}
\newtheorem{theorem}{Theorem}[section]
\newtheorem{lemma}[theorem]{Lemma}
\newtheorem{corollary}[theorem]{Corollary}
\newenvironment{proof}{\noindent{\bf Proof}\hspace{0.5em}}
    { \null  \hfill $\square$ \par}
\newcommand{\bbb}{[\Bpi]}
\newcommand{\Bpi}{{\mathscr B}}
\newcommand{\ST}{{\mathscr S}_T}
\newcommand{\SdT}{{\mathscr S}'_T}
 \newcommand{\Sl}{{\mathscr S}(\ell)}
  \newcommand{\Sm}{{\mathscr S}(m)}
\newcommand{\bbc}{{\mathcal C}}
\newcommand{\bbn}{{\mathcal N}}
\renewcommand{\r}{{q}}
\newcommand\w{\tau}
\renewcommand{\S}{{\cal S}}
\newcommand{\C}{{\cal C}}
\newcommand{\N}{{\cal N}}
\newcommand{\si}{\Sigma_\infty}
\newcommand{\li}{\ell_\infty}
\newcommand{\abb}{{\cal A(\cal S)}}
\newcommand{\pbb}{{\cal P(\cal S)}}
\newcommand{\orsps}{order-$\r$-subplanes}
\newcommand{\orsls}{order-$\r$-sublines}
\newcommand{\orsp}{order-$\r$-subplane}
\newcommand{\orsl}{order-$\r$-subline}
\newcommand{\takeaway}{\backslash}
\renewcommand{\st}{\,|\,}
\newcommand{\Xl}{\mathcal X}
\newcommand{\CapitalTheta}{\Theta}
\newcommand{\plustheta}{\theta}
\newcommand{\minustheta}{\theta^{\mbox{\rm {-}}}\;\!\!\!}
\newcommand{\ee}{\frac{\minustheta(e)}{\theta(e)}}
\newcommand{\eeq}{\frac{\minustheta(e)^\r}{\theta(e)}}
\begin{document}

\title{An investigation of the tangent splash of a subplane of $\PG(2,q^3)$}
%in the Bruck-Bose representation in $\PG(6,\r)$}
\date{}

\author{S.G. Barwick and Wen-Ai Jackson
%\date{\today}
\\ School of Mathematics, University of Adelaide\\
Adelaide 5005, Australia
}

%\lfoot{\jobname}
%\rfoot{\today}

\maketitle

%\tableofcontents

Corresponding Author: Dr Susan Barwick, University of Adelaide, Adelaide
5005, Australia. Phone: +61 8 8313 3983, Fax: +61 8 8313 3696, email: susan.barwick@adelaide.edu.au

Keywords: Bruck-Bose representation, cubic extension,  subplanes, Sherk
surfaces, cover planes, linear sets.

AMS code: 51E20

\begin{abstract}
In $\PG(2,q^3)$, let $\pi$ be a subplane of order $q$ that is tangent to
$\li$. The tangent splash of $\pi$ is defined to be the set of $q^2+1$
points on $\li$ that lie on a line of $\pi$. 
This article investigates properties of the tangent splash. 
We show that all tangent splashes are projectively equivalent,  investigate sublines contained in a tangent splash, and consider the structure of a tangent splash in 
 the
Bruck-Bose representation of $\PG(2,\r^3)$ in
$\PG(6,\r)$. We 
show that a tangent splash of $\PG(1,q^3)$ is a $\GF(q)$-linear set of rank 3 and size $q^2+1$; this allows us to use results about linear sets from \cite{lavr10} to obtain properties of tangent splashes.
\end{abstract}

\section{Introduction}

In \cite{barw12}, the authors study the Bruck-Bose representation of
$\PG(2,q^3)$ in $\PG(6,q)$ and determine the representation of \orsps\  and
\orsls\ of $\PG(2,q^3)$ in $\PG(6,q)$. 
In this article we investigate in more detail the {\em tangent order-$q$-subplanes} of
$\PG(2,q^3)$, that is, the subplanes of $\PG(2,q^3)$ that have order $q$ and
 meet $\li$ in exactly one point.

%Let $\pi$ be a subplane of $\PG(2,q^3)$ of order $q$. 
%Informally, the splash of $\pi$ is the set of points on $\li$ that lie on a line of
%$\pi$. More precisely, 
%Let $\cal L$ be the set of affine lines of
%$\PG(2,q^3)$ that meet $\pi$ in a line. Each line of $\cal L$
%meets $\li$ in a point. The {\em splash} of $\pi$ is defined to be the set of
%points of $\li$ that lie on a line of $\cal L$.
%If $\pi$ is secant to $\li$, the splash of $\pi$ is the $q+1$ points on
%$\pi\cap\li$, which forms an \orsl.
%If $\pi$ is tangent to $\li$ at a point $T$, then the splash of $\pi$ consists of 
%$T=\pi\cap\li$, and $q^2$ further points. We call this splash a {\em
%  tangent splash} $\ST$ with {\em centre} $T$.
%If $\pi$ is disjoint from $\li$ then the splash contains $q^2+q+1$ points,
%and we call it an {\em exterior splash}. 

Let $\pi$ be a subplane of $\PG(2,q^3)$ of order $q$ that meets $\li$ in 0 or 1 point.
Each line of $\pi$, when extended to $\PG(2,q^3)$, meets $\li$ in a
point. The set of points of $\li$ that lie on a line of $\pi$ is called the
{\em splash} of $\pi$. 
If $\pi$ is tangent to $\li$ at a point $T$, then the splash of $\pi$ consists of 
$T=\pi\cap\li$, and $q^2$ further points. We call this splash a {\em
  tangent splash} $\ST$ with {\em centre} $T$.
If $\pi$ is disjoint from $\li$, then the splash contains $q^2+q+1$ points,
and we call it an {\em exterior splash}. 
In this article we investigate the tangent splash of a tangent \orsp.
In future work the authors investigate the exterior splash.

Section~\ref{sect:intro} contains the necessary background information. Section~\ref{sect:sublines} investigates \orsls\ contained
in a tangent splash (this investigation is completed in
Section~\ref{sec:new-lin-set}). We show that the unique \orsl\ of
$\li$ 
containing the centre $T$ and two further points of a tangent splash $\ST$
is contained in $\ST$, and further, these are the only \orsls\ contained in
$\ST$. 
In Section~\ref{sect:group} we prove that the collineation group $\PGL(3,q^3)$ is transitive on
the tangent splashes of $\li$, and prove further useful group theoretic
results about tangent splashes. 
Section~\ref{sect:count} looks at counting the number of tangent splashes
on $\li$. Further, we look at how many points are needed to define a unique
\orsp\ with a given tangent splash. 

To continue with our investigation, we need
to introduce coordinates, and in Section~\ref{coord-subplane} we
coordinatise an \orsp\ of $\PG(2,q^3)$. 
This coordinatisation is used in Section~\ref{sec:new-lin-set}
 to show that a tangent splash of $\PG(1,q^3)$ is a $\GF(q)$-linear set of rank 3 and size $q^2+1$. This equivalence means that we can interpret results about linear sets from \cite{lavr10} to obtain results about tangent splashes. We can conclude that a tangent splash contains exactly $q^2+q$ \orsls, and these all contain the centre.
We  consider a tangent splash in the Bruck-Bose
representation in $\PG(6,q)$. A tangent splash corresponds to a set of $q^2+1$ planes
contained in a regular 2-spread of $\PG(5,q)$. As a consequence of the linear set equivalence,  a tangent splash in $\PG(5,q)$  has a set of $q^2+q+1$
{\em cover planes}, these meet the centre in distinct lines, and meet  every other plane  of the tangent splash in distinct points. 

In Section~\ref{sect:sherk} we
prove that a tangent splash is a Sherk surface as defined in \cite{sher86}.
In Section~\ref{sect:tgt-subspace} we begin with a tangent \orsp\ $\pi$,
and for each affine point $P\in\pi$ we construct another \orsp\ $P^\perp$,
called the tangent subspace. We then investigate the relationship between $\pi$ and $P^\perp$ in
$\PG(6,q)$.

\section[Bruck Bose representation]{The Bruck-Bose representation of $\PG(2,q^3)$ in $\PG(6,q)$}\Label{sect:intro}

%We begin with an introduction to the Bruck-Bose representation of
%$\PG(2,q^3)$ in $\PG(6,q)$. 
%First we present the representation to show that notation we will use;
%secondly we summarise results we need from \cite{barw12} about the
%representation of \orsls\ and \orsps\ of $\PG(2,q^3)$ in $\PG(6,q)$; and
%finally we look at collineations of $\PG(2,q^3)$ that corresponds to
%collineations of $\PG(6,q)$.

\subsection[Bruck-Bose]{The Bruck-Bose representation}\Label{BBintro}

We begin by describing the 
Bruck-Bose representation of $\PG(2,q^3)$ in $\PG(6,q)$, and introduce the
notation we will use. 

A 2-{\em spread} of $\PG(5,\r)$ is a set of $\r^3+1$ planes that partition
$\PG(5,\r)$. 
A 2-{\em regulus} of $\PG(5,\r)$ is a
set of $\r+1$ mutually disjoint planes $\pi_1,\ldots,\pi_{\r+1}$ with
the property that if a line meets three of the planes, then it meets all
${\r+1}$ of them. Three mutually disjoint planes in $\PG(5,\r)$ lie on a unique
2-regulus. A 2-spread $\S$ is {\em regular} if for any three planes in $\S$, the
2-regulus containing them is contained in $\S$. 
See \cite{hirs91} for
more information on 2-spreads.

The following construction of a regular 2-spread of $\PG(5,\r)$ will be
needed. Embed $\PG(5,\r)$ in $\PG(5,\r^3)$ and let $g$ be a line of
$\PG(5,\r^3)$ disjoint from $\PG(5,\r)$. Let $g^\r$, $g^{\r^2}$ be the
conjugate lines of $g$; both of these are disjoint from $\PG(5,\r)$. Let $P_i$ be
a point on $g$; then the plane $\langle P_i,P_i^\r,P_i^{\r^2}\rangle$ meets
$\PG(5,\r)$ in a plane. As $P_i$ ranges over all the points of  $g$, we get
$\r^3+1$ planes of $\PG(5,\r)$ that partition $\PG(5,\r)$. These planes form a
regular 2-spread $\S$ of $\PG(5,\r)$. The lines $g$, $g^\r$, $g^{\r^2}$ are called the (conjugate
skew) {\em transversal lines} of the 2-spread $\S$. Conversely, given a regular 2-spread
in $\PG(5,\r)$,
there is a unique set of three (conjugate skew) transversal lines in $\PG(5,\r^3)$ that generate
$\S$ in this way.

We will use the linear representation of a finite
translation plane $\P$ of dimension at most three over its kernel,
due independently to
Andr\'{e}~\cite{andr54} and Bruck and Bose
\cite{bruc64,bruc66}. 
Let $\si$ be a hyperplane of $\PG(6,\r)$ and let $\S$ be a 2-spread
of $\si$. We use the phrase {\em a subspace of $\PG(6,\r)\takeaway\si$} to
  mean a subspace of $\PG(6,\r)$ that is not contained in $\si$.  Consider the following incidence
structure:
the \emph{points} of $\abb$ are the points of $\PG(6,\r)\takeaway\si$; the \emph{lines} of $\abb$ are the 3-spaces of $\PG(6,\r)\takeaway\si$ that contain
  an element of $\S$; and \emph{incidence} in $\abb$ is induced by incidence in
  $\PG(6,\r)$.
Then the incidence structure $\abb$ is an affine plane of order $\r^3$. We
can complete $\abb$ to a projective plane $\pbb$; the points on the line at
infinity $\li$ have a natural correspondence to the elements of the 2-spread $\S$.
The projective plane $\pbb$ is the Desarguesian plane $\PG(2,\r^3)$ if and
only if $\S$ is a regular 2-spread of $\si\cong\PG(5,\r)$ (see \cite{bruc69}).

We use the following notation: if $P$ is an affine point of $\PG(3,q^3)$, we also
use $P$ to refer to the corresponding affine point in $\PG(6,q)$. If $T$
is a point of $\li$ in $\PG(2,q^3)$, we use $[T]$ to refer to the spread
element of $\S$ in $\PG(6,q)$ corresponding to $T$. More generally, if $X$
is a set of points of $\PG(2,q^3)$, then we let $[X]$ denote the
corresponding set  in $\PG(6,q)$. 

In the case $\pbb\cong\PG(2,\r^3)$, 
we can relate the coordinates of $\PG(2,\r^3)$ and $\PG(6,\r)$ as
follows. Let $\tau$ be a primitive element in $\GF(\r^3)$ with primitive
polynomial $$x^3-t_2x^2-t_1x-t_0.$$ Then every element $\alpha\in\GF(\r^3)$
can be uniquely written as $\alpha=a_0+a_1\tau+a_2\tau^2$ with
$a_0,a_1,a_2\in\GF(\r)$. Points in $\PG(2,\r^3)$ have homogeneous coordinates
$(x,y,z)$ with $x,y,z\in\GF(\r^3)$. Let the line at infinity $\li$ have
equation $z=0$; so the affine points of $\PG(2,\r^3)$ have coordinates
$(x,y,1)$. Points in $\PG(6,\r)$ have homogeneous coordinates
$(x_0,x_1,x_2,y_0,y_1,y_2,z)$ with $x_0,x_1,x_2,y_0,y_1,y_2,z\in\GF(\r)$.  Let $\si$ have equation $z=0$. 
Let $P=(\alpha,\beta,1)$ be a point of $\PG(2,\r^3)$. We can write $\alpha=a_0+a_1\tau+a_2\tau^2$ and
$\beta=b_0+b_1\tau+b_2\tau^2$ with $a_0,a_1,a_2,b_0,b_1,b_2\in\GF(\r)$. Then
the map 
\begin{eqnarray*}
\sigma\colon \PG(2,\r^3)\takeaway\li&\rightarrow&\PG(6,\r)\takeaway\Sigma_\infty\\
(\alpha,\beta,1)&\mapsto&(a_0,a_1,a_2,b_0,b_1,b_2,1)
\end{eqnarray*}
 is the Bruck-Bose map. 
More generally, if $z\in\GF(q)$, then we can generalise this to
$$\sigma(\alpha,\beta,z)=(a_0,a_1,a_2,b_0,b_1,b_2,z).$$
Note that if $z=0$, then $T=(\alpha,\beta,0)$ is a point of $\li$, and
$\sigma(\alpha,\beta,0)$ is a single point in the spread element $[T]$
corresponding to $T$. 

%We also note that as $\tau$ is a solution to $x^3-t_2x^2-t_1x-t_0$, the
%other two roots are $\tau^\r,\tau^{\r^2}$.  Hence 
%$$t_2=\tau+\tau^\r+\tau^{\r^2},\
%-t_1=\tau\tau^\r+\tau\tau^{\r^2}+\tau^\r\tau^{\r^2},\ t_0=\tau\tau^\r\tau^{\r^2}.$$

\subsection[Tangent subplanes]{Subplanes and sublines in the Bruck-Bose representation}

An \emph{\orsl}\ of $\PG(1,q^3)$ is defined to be  
one of the images of $\PG(1,\r)=\{(a,1)\st a\in\GF(\r)\}\cup\{(1,0)\}$
under $\PGL(2,\r^3)$.  An \emph{\orsp} of $\PG(2,\r^3)$ is a subplane of
$\PG(2,q^3)$ of order $q$. Equivalently, it is an image of $\PG(2,\r)$ under $\PGL(3,\r^3)$. An \emph{\orsl} of
$\PG(2,q^3)$ is a line of an \orsp\ of $\PG(2,q^3)$. 

In \cite{barw12,barw13}, the authors determine the representation of \orsps\  and
\orsls\ of $\PG(2,q^3)$ in the Bruck-Bose representation in $\PG(6,q)$, and
we quote the results we need here.
 We first introduce some notation to simplify the statements. A {\em
   special conic} $\C$ is a non-degenerate conic in a spread element (a plane), such
that 
when we extend $\C$ to $\PG(6,q^3)$, it
 meets the transversals of
the regular 2-spread $\S$. Similarly, a {\em special twisted cubic} $\N$ is a
twisted cubic in a 3-space of $\PG(6,q)\backslash\si$ about a spread
element, such that when we extend $\N$ to $\PG(6,q^3)$, it meets the transversals of $\S$. 
Note that a special twisted cubic has no points in $\si$. 

\begin{theorem}{\rm \cite{barw12}}\Label{sublinesinBB}
Let $b$ be an \orsl\ of $\PG(2,q^3)$. 
\begin{enumerate}
\item\Label{subline-secant-linfty} If $b\subset\li$, then in $\PG(6,q)$, $b$ corresponds to a 
$2$-regulus of $\S$. Conversely every $2$-regulus of $\S$ corresponds to
an \orsl\ of $\li$. 
\item If $b$ meets $\li$ in a point, then in $\PG(6,q)$, $b$  corresponds to
  a line of $\PG(6,q)\backslash\si$. Conversely every line of
  $\PG(6,q)\backslash\si$ corresponds to an \orsl\ of $\PG(2,q^3)$ tangent
  to $\li$.
\item\Label{FFA-orsl}
If $b$ is disjoint from $\li$, then in
$\PG(6,q)$, $b$ corresponds to a special twisted cubic. Further, a
twisted cubic $\N$ of $\PG(6,q)$ corresponds to an \orsl\ of
$\PG(2,q^3)$ if and only if $\N$ is special.
\end{enumerate}
\end{theorem}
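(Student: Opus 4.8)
The plan is to handle the three cases in order of increasing difficulty --- the tangent case (ii), then the secant case (i), then the exterior case (iii) --- each time proving the forward direction by a direct coordinate computation in a normal form, and then obtaining the converse by reversing that computation, using transitivity of $\PGL(3,q^3)_{\li}$ (equivalently, the stabiliser of $\S$ among the collineations of $\PG(6,q)$) to bring an arbitrary \orsl, line, $2$-regulus or twisted cubic into the chosen form. The single fact that drives everything is that $\sigma$, restricted to the affine points of a line $\ell$ of $\PG(2,q^3)$, is just the $\GF(q)$-linear ``field reduction'' $\alpha\mapsto(a_0,a_1,a_2)$ where $\alpha=a_0+a_1\tau+a_2\tau^2$: it is genuinely $\GF(q)$-linear on affine points, and only its projective closure is nonlinear. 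So the Bruck--Bose image of a subset of $\ell$ is obtained by pushing that subset through one fixed $\GF(q)$-linear map into the $3$-space $\langle m,[T]\rangle$ about the spread element $[T]=[\ell\cap\li]$.

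For (ii): a tangent \orsl\ $b$ lies on a line $\ell$ through $T=b\cap\li$; parametrising $\ell$ with $T$ at infinity, $b\setminus\{T\}$ is an affine \orsl\ through $\infty$, i.e.\ a set $\{ca+d:a\in\GF(q)\}$ with fixed $c\in\GF(q^3)^\ast$, $d\in\GF(q^3)$, which $\sigma$ carries to the affine line $\{a\,\sigma(c)+\sigma(d):a\in\GF(q)\}$; together with the point of $[T]$ in the direction $\sigma(c)$ this is a line of $\PG(6,q)\backslash\si$. The converse reverses this: a line $m$ of $\PG(6,q)\backslash\si$ meets $\si$ in a unique point, lying in a unique spread element $[T]$, and $\langle m,[T]\rangle$ is a line $\ell$ of $\PG(2,q^3)$ through $T$; pulling the $q$ affine points of $m$ back through the linear map exhibits them, with $T$, as an affine \orsl\ through $\infty$. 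For (i): identify $\li$ with the slope-parametrised spread $\S=\{S_\alpha\}$; then $b\subset\li$ becomes a set $\{S_\alpha:\alpha\in J\}$ with $J$ an \orsl\ of $\PG(1,q^3)$, and the classical fact that in a regular spread the $2$-reguli it contains are exactly the sets $\{S_\alpha:\alpha\in J'\}$ with $J'$ an \orsl\ --- proved by checking this for $J'=\GF(q)\cup\{\infty\}$ and transporting by $\PGL(2,q^3)$, which acts on $\S$ by spread-preserving collineations and is sharply $3$-transitive on spread elements --- gives both directions at once.

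For (iii): an exterior \orsl\ $b$ lies on a line $\ell$ meeting $\li$ at $T\notin b$; with $T$ at infinity, $b$ is an \orsl\ of $\PG(1,q^3)$ missing $\infty$, hence $b=\{p+k/(a+s):a\in\GF(q)\}\cup\{p\}$ with $s\notin\GF(q)$ and $k\neq0$. Expanding $(a+s)^{-1}$ in the $\GF(q)$-basis $1,s,s^2$ and applying the linear field reduction shows that $[b]$ is the image of a map $a\mapsto\bigl(g_0(a):g_1(a):g_2(a):N(a)\bigr)$ of $\PG(1,q)$ into $\langle m,[T]\rangle$, where $N(a)$ is the cubic norm polynomial of $s$ over $\GF(q)$ and the $g_i$ have degree $\le 3$; a short check that these four polynomials are linearly independent shows $[b]$ is a twisted cubic, with no point in $\si$. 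For ``special'', extend the parameter over $\GF(q^3)$: the denominator $N(a)$ vanishes exactly at the conjugates $s,s^q,s^{q^2}$, these parameter values give the three points in which $\overline{[b]}$ meets $[T]^{(q^3)}$, and one checks they are precisely $g\cap[T]^{(q^3)}$, $g^q\cap[T]^{(q^3)}$, $g^{q^2}\cap[T]^{(q^3)}$ (Frobenius permutes both triples cyclically), so $\overline{[b]}$ meets the transversals. Conversely, for a special twisted cubic $\N$ in a $3$-space about a spread element $[T]$, the specialness together with the absence of points in $\si$ should force the preimage of $\N$ on the associated line $\ell$ to be the image of $\PG(1,q)$ under a fractional-linear map, i.e.\ an \orsl\ disjoint from $\li$.

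The main obstacle is case (iii), in two places. First, that a special twisted cubic necessarily has the normal form above is not formal: one needs the group induced on $\langle\N\rangle\cong\PG(3,q)$ by the stabiliser of $\ell$ in $\PGL(3,q^3)_{\li}$ --- it is the image in $\PGL(4,q)$ of the affine (point-$T$-fixing) subgroup of $\PGL(2,q^3)$ acting on $\ell$ --- and then a count showing that the condition ``meets all three transversals'' singles out exactly one orbit of twisted cubics of $\langle\N\rangle$. Second, making the transversal bookkeeping rigorous --- tracking which of $g,g^q,g^{q^2}$ meets which extended $S_\alpha$, and at which point, in the chosen coordinates --- is the fiddly computational heart of the whole theorem; once the normal forms and the $\GF(q)$-linearity of $\sigma$ on affine points are in place, cases (i) and (ii) and the forward part of (iii) are routine.
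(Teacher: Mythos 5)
First, a point of order: this paper does not prove Theorem~\ref{sublinesinBB} at all --- it is quoted from \cite{barw12} --- so there is no internal proof to measure your argument against, and your proposal has to stand on its own. Judged that way, parts (i) and (ii) and the forward half of (iii) are correct and are the standard field-reduction arguments: $\sigma$ is indeed $\GF(q)$-linear on the affine points of a fixed line; a tangent \orsl\ is affinely a set $\{ca+d \st a\in\GF(q)\}$, whose image is a line meeting $\si$ in a point of $[T]$, and the converse reverses this; the correspondence on $\li$ between \orsls\ and $2$-reguli follows from the one computation for $\GF(q)\cup\{\infty\}$ together with $3$-transitivity of $\PGL(2,q^3)$ and the uniqueness of the $2$-regulus (respectively \orsl) through three elements; and the parametrisation $a\mapsto p+k(a+s)^{-1}$, cleared by $N(a)=(a+s)(a+s^q)(a+s^{q^2})$, does exhibit the image of an exterior \orsl\ via four linearly independent binary cubics, hence as a twisted cubic avoiding $\si$ (working in the basis $1,s,s^2$ rather than $1,\tau,\tau^2$ is harmless, being a $\mathrm{GL}(3,q)$ change of coordinates).

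The two places you flag as incomplete are, however, exactly where the substance of part (iii) lies, and as written they are genuine gaps. (a) Specialness of the image: your setup correctly locates the candidate points at the parameter values $a=-s,-s^q,-s^{q^2}$ (where the numerator takes the values $k(s^q-s)(s^{q^2}-s)$ and its conjugates), but until you fix coordinates for $g,g^q,g^{q^2}$ and verify that these three points are the points where the transversals meet the extension of $[T]$ --- not merely some points of that extended plane --- the forward claim of (iii) is not proved. (b) The converse of (iii) is only a strategy. It is a viable one: the stabiliser $E$ of $T$ in $\PGL(2,q^3)$ acting on $\ell$ has order $q^3(q^3-1)$ and acts sharply transitively on the $q^3(q^3-1)$ \orsls\ of $\ell$ not through $T$, and the induced group of the corresponding $3$-space preserves the family of special twisted cubics (any spread-preserving collineation permutes $\{g,g^q,g^{q^2}\}$); so it would suffice to prove either that this induced group is transitive on the special twisted cubics of that $3$-space, or that their number is at most $q^3(q^3-1)$, or alternatively to argue directly (e.g.\ via the fact that a twisted cubic over $\GF(q^3)$ is determined by six suitable points, three of which can be taken on the transversals) that a special cubic coincides with the image of some exterior \orsl. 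None of this is carried out, and it is precisely this normal-form/counting step --- which you yourself call ``not formal'' --- that constitutes the characterisation half of part (iii) in \cite{barw12}. As it stands your proposal establishes that every exterior \orsl\ yields a twisted cubic with no points in $\si$, and sketches the rest; it does not yet prove the stated equivalence.
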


\begin{theorem}{\rm \cite{barw12}}\Label{FFA-subplane-tangent}
Let $\Bpi$ be an \orsp\ of $\PG(2,\r^3)$.
\begin{enumerate}
\item If $\Bpi$ is secant to $\li$, then
  in $\PG(6,q)$,
  $\Bpi$ corresponds to a plane of $\PG(6,q)$ that meets $q+1$ spread
  elements. Conversely, any plane of $\PG(6,q)$ that meets $q+1$ spread
  elements corresponds to an \orsp\ of $\PG(2,q^3)$ secant to $\li$. 
\item  Suppose  $\Bpi$  is tangent to $\li$ in the
  point $T$. 
Then $\Bpi$ determines a set $\bbb$  of points in $\PG(6,\r)$ (where the
affine points of $\Bpi$ correspond to the affine points of $\bbb$) such that:
\begin{enumerate}
\item[{\rm (a)}]  $\bbb$ is a ruled surface with conic directrix $\bbc$ contained in
  the plane $[T]\in\S$, and twisted cubic directrix $\bbn$ contained in a
  3-space $\Sigma$ that meets $\si$ in a spread element (distinct from
  $[T]$). The points of $\bbb$ lie on $\r+1$ pairwise disjoint generator lines joining $\bbc$ to $\bbn$.
\item[{\rm (b)}]  The $\r+1$ generator lines of $\bbb$ joining $\bbc$
  to $\bbn$ are determined by a projectivity from $\bbc$ to $\bbn$.
\item[{\rm (c)}]  When we extend $\bbb$
  to $\PG(6,\r^3)$, it contains the conjugate transversal
  lines $g,g^\r,g^{\r^2}$ of the regular 2-spread $\S$. So $\C$ and $\N$ are special.
\item[{\rm (d)}]  $\bbb$ is the intersection of nine quadrics in $\PG(6,\r)$.
\end{enumerate}
\end{enumerate}
\end{theorem}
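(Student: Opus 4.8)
The plan is to handle Part~1 via the classical subplane--spread correspondence and Part~2 by computing, in coordinates, the Bruck--Bose image of one conveniently chosen tangent \orsp. For Part~1, a secant \orsp\ $\Bpi$ meets $\li$ in an \orsl\ $b\subset\li$, so by Theorem~\ref{sublinesinBB}(\ref{subline-secant-linfty}) $b$ corresponds to a $2$-regulus $\R\subseteq\S$, that is, to $q+1$ spread elements; the $q^2$ affine points of $\Bpi$ map under $\sigma$ to $q^2$ points which (a short calculation shows) span a plane $\rho$ with $\rho\cap\si$ a transversal line of $\R$, so $\rho$ meets precisely the $q+1$ spread elements of $\R$, each in one point. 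Conversely, a plane $\rho$ meeting $q+1$ spread elements has $\rho\cap\si$ a line lying in $q+1$ mutually disjoint spread planes, hence a transversal of a $2$-regulus $\R\subseteq\S$ (using the regularity of $\S$); the \orsl\ of $\li$ carried by $\R$ together with the $q^2$ affine points of $\rho$ pulls back through $\sigma$ to a set of $q^2+q+1$ points of $\PG(2,q^3)$ satisfying the subplane axioms, i.e.\ a secant \orsp.

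For Part~2 I would put $T=(0,1,0)$ and, after a collineation of $\PG(2,q^3)$ that carries a line tangent to $\PG(2,q)$ onto $\li$, work with the canonical tangent \orsp\ $\Bpi$ whose affine points are $\bigl(\,x/(x-\w z),\ y/(x-\w z),\ 1\,\bigr)$ as $(x,y,z)$ runs over $\PG(2,q)\takeaway\{(0,1,0)\}$. Rationalising $1/(x-\w z)$ against its conjugates introduces the norm form $n=n(x,z)=x^3-t_2x^2z-t_1xz^2-t_0z^3$, and one computes that $\sigma$ carries this point to
\[
(\,xP_0,\ xP_1,\ xP_2,\ yP_0,\ yP_1,\ yP_2,\ n\,),\qquad P_0=x^2-t_2xz-t_1z^2,\ \ P_1=xz-t_2z^2,\ \ P_2=z^2 .
\]
Now~(a) can be read off. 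Fixing a line of $\Bpi$ through $T$ fixes the slope $t=x/z$ (or the vertical direction); the displayed point then traces a line $m_t$ of $\PG(6,q)\takeaway\si$ as $y$ varies, and the affine parts of the $m_t$ are pairwise disjoint since distinct lines of $\Bpi$ through $T$ meet only at $T$. As each such line of $\Bpi$ is a tangent \orsl, Theorem~\ref{sublinesinBB}(2) gives that $m_t$ meets $\si$ in a single point $Q_t\in[T]$, and eliminating $t$ exhibits the resulting $q+1$ points as a non-degenerate conic $\bbc$ in the plane $[T]$. Picking a line $\ell$ of $\Bpi$ not through $T$, $\ell$ is an \orsl\ disjoint from $\li$, so by Theorem~\ref{sublinesinBB}(\ref{FFA-orsl}) its $\sigma$-image is a special twisted cubic $\bbn$ spanning a $3$-space $\Sigma$ with $\Sigma\cap\si=[U]$, $U=\ell\cap\li\neq T$. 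Since each line of $\Bpi$ through $T$ meets $\ell$ in a unique affine point $P$, we get $\sigma(P)\in m_t\cap\bbn$, so $m_t=\langle Q_t,\sigma(P)\rangle$ joins $\bbc$ to $\bbn$; as $\ell$ has $q+1$ points these are the $q+1$ generators, and the count $(q+1)^2=(q^2+q)+(q+1)$ shows they exhaust $\bbb$.

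Part~(b) is a parametrisation check: the pencil of lines of $\Bpi$ through $T$ is a $\PG(1,q)$, the maps $t\mapsto Q_t$ and ``$t\mapsto$ (Bruck--Bose image of the point where the $t$-th line of the pencil meets $\ell$)'' are projective parametrisations of $\bbc$ and of $\bbn$, and $m_t$ joins the two parameter-$t$ points, so the induced bijection $\bbc\to\bbn$ is a projectivity. For~(d), $\PG(2,q)$ is cut out in $\PG(2,q^3)$ by the rationality conditions $x/z,\ y/z,\ x/y\in\GF(q)$; transporting these through the chosen collineation and through $\sigma$, splitting coefficients over the basis $1,\w,\w^2$, and using that $\alpha\mapsto\alpha^\r$ is $\GF(q)$-linear in the coordinate triple of $\alpha$ (so that $\alpha^{\r+1}$ is quadratic), each condition becomes three quadrics over $\GF(q)$ in $\PG(6,q)$, nine in all; a dimension/degree comparison with the parametrisation above then identifies their common locus with $\bbb$. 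For~(c), regarding these nine quadrics over $\GF(q^3)$ and substituting the standard coordinates of the transversals $g,g^\r,g^{\r^2}$ of $\S$ shows they satisfy all nine, so $\bbbe$ contains $g,g^\r,g^{\r^2}$; equivalently, the extensions of $\bbc$ and of $\bbn$ each meet the three transversals, so both directrices are special.

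The step I expect to be the main obstacle is the bookkeeping in Part~2: arranging the collineation so that $\bbc$ and $\bbn$ come out in recognisable normal form --- the norm form $n(x,z)$ is what makes the algebra delicate --- and then proving that the nine quadrics cut out \emph{exactly} $\bbb$ and nothing larger. In Part~1 the only real subtlety is the converse, i.e.\ verifying that an arbitrary plane on $q+1$ spread elements genuinely pulls back to a subplane; but this is standard Bruck--Bose/Andr\'e theory.
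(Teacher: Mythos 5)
First, a point of comparison: the paper does not prove this theorem at all — it is quoted from \cite{barw12} as background — so there is no in-paper proof to measure your attempt against; what you have written is essentially a reconstruction of the coordinate argument of that reference. Within that framework, Part~1, 2(a) and 2(b) of your sketch are sound in outline: the parametrisation $(xP_0,xP_1,xP_2,yP_0,yP_1,yP_2,n)$ with the norm form $n(x,z)$ is correct, the pencil of lines of $\Bpi$ through $T$ does yield $q+1$ mutually disjoint generator lines, the points $Q_t$ are a quadratically parametrised (hence non-degenerate) conic in $[T]$, and Theorem~\ref{sublinesinBB}(\ref{FFA-orsl}) supplies the twisted cubic directrix in a $3$-space about a spread element distinct from $[T]$; the $(q+1)^2$ count correctly shows the generators exhaust $\bbb$. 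For 2(b) you should still make precise what ``projectivity from $\bbc$ to $\bbn$'' means and check that both of your parameter maps preserve cross-ratio, but this is routine for the Veronese-type parametrisations you wrote down.

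The genuine gap is in 2(d), which you yourself flag as the obstacle but do not overcome. Producing the nine quadrics from the three $\GF(q)$-rationality conditions is fine, and the easy containment ($\bbb$ lies on all nine) follows from your parametrisation; but the claim that ``a dimension/degree comparison with the parametrisation identifies their common locus with $\bbb$'' does not stand as an argument over $\GF(q)$. A comparison of dimension and degree controls the zero locus over the algebraic closure only under irreducibility/reducedness hypotheses you have not established, and in any case it cannot exclude extra $\GF(q)$-rational points of the intersection of the nine quadrics lying off the ruled surface. What is actually required is the direct verification that every point of $\PG(6,q)$ satisfying all nine quadrics has the parametrised form (this explicit computation is precisely the content of the corresponding proof in \cite{barw12}). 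Your argument for 2(c) inherits the same gap, since substituting the coordinates of $g,g^q,g^{q^2}$ into the nine quadrics only shows the transversals lie on the variety they cut out, which you have not yet identified with the extension of $\bbb$; a self-contained fix is to extend your parametrisations of $\bbc$ and $\bbn$ to $\GF(q^3)$ and exhibit the parameter values at which they meet the three transversals, which simultaneously gives ``$\bbc$ and $\bbn$ are special'' directly.
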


\subsection[Collineations]{The collineation group in the Bruck-Bose
  representation}

Consider a homography $\alpha\in\PGL(3,\r^3)$, acting on $\PG(2,\r^3)$, which fixes
$\li$ as a set of points. There is a corresponding homography
$[\alpha]\in\PGL(7,\r)$ acting on the Bruck Bose representation of
$\PG(2,\r^3)$ as $\PG(6,\r)$. Note that $[\alpha]$ fixes the hyperplane
$\si$ at infinity of $\PG(6,\r)$, and permutes the elements of the regular
spread $\S$ in
$\si$.  Consequently subgroups of $\PGL(3,\r^3)$ fixing $\li$ correspond to
subgroups of $\PGL(7,\r)$ fixing $\si$ and permuting the spread elements in
$\si$. 
For more details, see \cite{barwtgt2}.
Hence when we prove results about transitivity in $\PG(2,q^3)$, there is a
corresponding transitivity result in $\PG(6,q)$.
We are interested in a particular Singer cycle acting on $\PG(6,q)$. It is
straightforward to prove the following.

\begin{theorem}\Label{PGL7r}
Consider  the homography $\CapitalTheta\in\PGL(7,\r)$ with $7\times 7$ matrix
$$M=\begin{pmatrix}T&0&0\\0&T&0\\0&0&1\end{pmatrix}, \quad{\rm where}\ \ 
T=\begin{pmatrix}0&0&t_0\\1&0&t_1\\0&1&t_2\end{pmatrix}.$$
%$$ OLD\ T=\begin{pmatrix}0&1&0\\0&0&1\\t_0&t_1&t_2\end{pmatrix}.$$
%$, where $T$ is given in {\rm (\ref{matrixT})}. 
So a point
Then in $\PG(6,q)$, $\CapitalTheta$ fixes each plane of the 2-spread $\S$, and $\langle\CapitalTheta\rangle$
acts regularly on the set of points, and on the set of lines of each spread element.
\end{theorem}

\section{Sublines contained in a tangent splash}\Label{sect:sublines}

Let $\pi$ be an \orsp\ of $\PG(2,q^3)$ tangent to $\li$ at the point $T$. 
Recall that the \emph{tangent splash} $\ST$ of $\pi$ is the set of points
of $\li$ lying on lines of $\pi$, and $T=\pi\cap\li$ is called the {\em centre} of $\ST$. 
As two distinct lines of $\pi$ meet in a
point of $\pi$, the lines of $\pi$ not through $T$ meet $\li$ in distinct
points. Hence
a tangent splash  has $q^2+1$ points. 
We say a set of $q^2+1$ points $\ST\subset\li$ is a tangent splash with centre $T$ 
if there is an \orsp\ $\pi$ with
 tangent splash $\ST$ and $\pi\cap\li=T$.
 We will show in Theorem~\ref{uniquecentre} that all \orsps\ with a given tangent splash have the same centre, that is, a tangent splash has a unique centre. 
 Note that it also makes sense to talk about the {\em tangent splash of $\pi$ onto the line $\ell$} where $\pi$ is an \orsp\ tangent to the line $\ell$.

In this section we investigate the \orsls\ contained in a tangent splash $\ST$
and show that the points of a tangent splash form a Desarguesian affine plane. 
The next result investigates \orsls\ contained in $\ST$ which contain
the centre $T$. We show later in Corollary~\ref{nos6} that every \orsl\ contained in $\ST$
must contain the centre $T$.

\begin{lemma}\Label{numberofsublines}
Let $\pi$ be an \orsp\ of $\PG(2,q^3)$ tangent to $\li$ at the point $T$ with tangent splash $\ST$.
\begin{enumerate}
\item \Label{sublineprojection} 
The lines of $\pi$ through an affine point $P\in\pi$ meet $\ST$ in an
\orsl\ $t_P$.  
Further, there is a bijection between the $q^2+q$ affine
  points $P$ of $\pi$  and the \orsls\  $t_P$ containing $T$ and contained in $\ST$.
\item \Label{tgtsplash2pts} Let $U,V$ be distinct points non-centre points of $\ST$. Then the unique \orsl\ containing $T,U,V$ is contained in $\ST$. 
\end{enumerate}
\end{lemma}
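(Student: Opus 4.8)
The plan is to prove part~(\ref{sublineprojection}) first and then read off part~(\ref{tgtsplash2pts}) as a corollary. Fix an affine point $P\in\pi$ and set $t_P=\{\,\overline\ell\cap\li\st\ell\text{ a line of }\pi\text{ through }P\,\}$. Since $P\notin\li$, the projection $\rho_P\colon n\mapsto n\cap\li$ from the pencil of lines of $\PG(2,q^3)$ through $P$ onto $\li$ is a perspectivity, hence a projectivity between these two copies of $\PG(1,q^3)$, and so it carries \orsls\ to \orsls. The $q+1$ lines of $\pi$ through $P$, extended, form an \orsl\ of the pencil at $P$: writing $\pi=\PG(2,q)^{g}$ for a suitable $g\in\PGL(3,q^3)$ and pulling back by $g$, this reduces to the evident fact that the lines of $\PG(2,q)$ through a point $P_0$ form an \orsl\ of the pencil of $\PG(2,q^3)$ at $P_0$ (take two $\GF(q)$-rational lines through $P_0$ as a basis of that pencil). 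Applying $\rho_P$ then shows that $t_P$ is an \orsl\ of $\li$; it has $q+1$ points, it contains $T=\rho_P(\overline{PT})$, and each of its points lies on a line of $\pi$, so $t_P\subseteq\ST$.

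For the bijection I use the fact, implicit in the count of $|\ST|$ recalled just before the lemma, that $\ell\mapsto\overline\ell\cap\li$ is a bijection from the $q^2$ lines of $\pi$ not through $T$ onto the $q^2$ non-centre points of $\ST$. \emph{Injectivity of $P\mapsto t_P$}: if $P\ne Q$ are affine points of $\pi$ and $W\in t_P\cap t_Q$ with $W\ne T$, then $W=\overline\ell\cap\li=\overline m\cap\li$ for lines $\ell\ni P$, $m\ni Q$ of $\pi$, neither through $T$ (else the splash point would be $T$); since $W\notin\pi$ whereas $\ell\ne m$ would give $\overline\ell\cap\overline m=\ell\cap m\in\pi$, we must have $\ell=m$, whence $W=\overline{PQ}\cap\li$. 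Thus $t_P\cap t_Q\subseteq\{T,\overline{PQ}\cap\li\}$ has at most $2$ points, fewer than $q+1=|t_P|$, so $t_P\ne t_Q$. \emph{Surjectivity}: let $b$ be any \orsl\ of $\li$ with $T\in b\subseteq\ST$, choose distinct non-centre points $W_1,W_2\in b$ (there are $q\ge2$ of them), let $m_1,m_2$ be the lines of $\pi$ with $\overline{m_i}\cap\li=W_i$, and set $P=m_1\cap m_2$, an affine point of $\pi$; then $t_P$ is an \orsl\ through the three distinct points $T,W_1,W_2$, and since these lie on a unique \orsl\ of $\li$ we conclude $b=t_P$.

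Part~(\ref{tgtsplash2pts}) then follows at once: given distinct non-centre points $U,V$ of $\ST$, let $m_U,m_V$ be the corresponding lines of $\pi$ and $P=m_U\cap m_V$; then $t_P$ is an \orsl\ through $T,U,V$, hence equals the unique such \orsl, and $t_P\subseteq\ST$ by part~(\ref{sublineprojection}). The only step requiring real care is the identification of $t_P$ as an \orsl\ rather than merely a $(q+1)$-subset of $\li$ — that is, checking that the extended lines of $\pi$ through $P$ form an \orsl\ of the pencil at $P$ and that $\rho_P$ is a projectivity; the rest is bookkeeping with the ``line $\leftrightarrow$ non-centre splash point'' bijection and the uniqueness of the \orsl\ through three points of $\li$.
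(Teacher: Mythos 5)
Your proof is correct and follows essentially the same route as the paper: the projection from $P$ shows $t_P$ is an \orsl\ contained in $\ST$, the uniqueness of the line of $\pi$ through a non-centre splash point gives injectivity, and the construction $P=m_1\cap m_2$ from two splash points gives part 2. The only cosmetic difference is that you complete the bijection by proving surjectivity directly (reusing that same construction), whereas the paper closes it with the count $\binom{q^2}{2}/\binom{q}{2}=q(q+1)$ of \orsls\ through $T$ contained in $\ST$.
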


\begin{proof} 
Label the lines of $\pi$ through $P$ by
$\ell_0=PT,\ell_1,\ldots,\ell_q$. For each $i$, let $\overline\ell_i$ denote the
extension of $\ell_i$ to $\PG(2,q^3)$. 
If $m$ is another line of $\pi$, $P\notin m$, then the points $\ell_0\cap
m,\ldots,\ell_q\cap m$ are the $q+1$ distinct points of $m$. Hence the points
$t_P=\{T=\overline\ell_0\cap\li,\overline\ell_1\cap\li,\ldots,\overline\ell_q\cap\li\}$ are the projection from $P$ of an \orsl\
$m$ onto the line $\li$, and so $t_P$ is an \orsl\ of $\li$. As the \orsls\
$\ell_i$ are lines of $\pi$, the points $\overline\ell_i\cap\li$ are points of
$\ST$, so $t_P$ is contained in $\ST$ and contains the centre $T$. 

As two lines of $\pi$ meet in $\pi$, through a non-centre point $U$ of $\ST$ there
is unique line $u$ of
$\pi$. So an \orsl\ of $\ST$ containing $T$ is projected by at most one point of
$\pi$. 
Hence for distinct affine points $P,Q$ of $\pi$, the \orsls\ $t_P$ and $t_Q$ are distinct.
There are $q^2+q$ points of $\pi$ distinct from $T$. We show in the next
paragraph that there are $q^2+q$ \orsls\  contained in $\ST$ and containing $T$, and hence
we have a bijection from affine points $P\in\pi$ to \orsls\ $t_P$ contained
in $\ST$ and containing $T$.

Let $U,V$ be distinct points of $\ST$, $U,V\neq T$, and let $u,v$ be the distinct lines in $\pi$ meeting $\li$ in
$U,V$ respectively.  Let $P=u\cap v$, so $P\in\pi$ and $P\ne T$ as
$T\not\in u,v$. 
The lines of $\pi$ through $P$ meet $\li$ in an \orsl\ $t_P$ containing 
$T,U,V$. Hence $t_P$ is the unique \orsl\ through $T,U,V$. By part 1, $t_P$
is contained in $\ST$, completing the proof of  part \ref{tgtsplash2pts}.
As there are $\binom{\r^2}2/\binom \r 2$ ways to choose $U,V$ to get a
unique \orsl, $\ST$ contains exactly $q(q+1)$ \orsls\ containing $T$,
 completing the proof of part 1.
\end{proof}

Lemma~\ref{numberofsublines} has a natural correspondence in
the Bruck-Bose representation in $\PG(6,q)$. For example, part 2 says that
given a tangent splash $[\ST]$ which consists of $q^2+1$
spread elements, the unique 2-regulus containing the centre $[T]$ and two
further elements $[U],[V]$ of $[\ST]$ is contained in $[\ST]$.

The next result shows how to construct an affine plane from the points and
\orsls\ of a tangent splash. 

\begin{theorem}\Label{splash-is-affine-plane}
Consider the incidence structure $\mathcal I$ with {\sl points} the 
$q^2$ elements of $\ST\backslash\{T\}$, {\sl lines} 
the \orsls\ containing $T$ and contained in $\ST$, and {\sl incidence} is inclusion.
Then $\mathcal I$ is a Desarguesian affine plane of order $\r$. 
\end{theorem}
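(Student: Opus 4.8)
The plan is to verify the affine plane axioms directly, leaning on Lemma~\ref{numberofsublines} for the structural input and on a point count to pin down the order. First I would record the data: $\mathcal I$ has $q^2$ points (the non-centre points of $\ST$) and, by part~1 of Lemma~\ref{numberofsublines}, exactly $q^2+q$ lines, each of which is an \orsl\ of $\li$ containing $T$; since an \orsl\ has $q+1$ points and one of them is $T$, each line of $\mathcal I$ carries exactly $q$ points. The key incidence fact is part~2 of Lemma~\ref{numberofsublines}: any two distinct points $U,V$ of $\mathcal I$ lie on a unique \orsl\ through $T$ contained in $\ST$, namely the unique \orsl\ of $\li$ through $T,U,V$. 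This is precisely the statement that two points of $\mathcal I$ determine a unique line of $\mathcal I$.

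Next I would establish the parallel axiom. Fix a line $\ell$ of $\mathcal I$ and a point $U\notin\ell$. Using the bijection of part~1, write $\ell=t_P$ for a unique affine point $P\in\pi$, and let $u$ be the unique line of $\pi$ meeting $\li$ at $U$; note $U\ne T$ forces $P\notin u$ (otherwise $U\in t_P=\ell$). The lines of $\mathcal I$ through $U$ are exactly the $t_Q$ with $Q$ an affine point of $u$ (these are the lines of $\pi$ through the various points of $u$, projected from those points), and $t_Q$ meets $\ell=t_P$ in a non-centre point iff the corresponding lines of $\pi$ meet off $T$, i.e.\ iff $Q$ lies on the line $P Q'$ for the appropriate point — more carefully, $t_Q\cap t_P\setminus\{T\}\ne\varnothing$ iff the line of $\pi$ through $P$ and $Q$ passes through a common splash point, which happens for all but exactly one choice of $Q$ on $u$, namely the $Q$ for which line $PQ$ passes through $T$. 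Hence among the $q+1$ lines $t_Q$ of $\mathcal I$ through $U$, exactly $q$ meet $\ell$ and exactly one, say $t_{Q_0}$ with $Q_0=u\cap PT$, is disjoint from $\ell$; this is the unique parallel.

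Finally I would note non-degeneracy (three non-collinear points exist, immediate from $|\ST|=q^2+1\ge 5$ and the fact that lines have only $q$ points once $q\ge 2$), and then invoke the standard fact that an incidence structure in which any two points lie on a unique line, the parallel axiom holds, and there exist three non-collinear points, is an affine plane; its order is the common line size, namely $q$. As a consistency check, such a plane has $q^2$ points and $q^2+q$ lines, matching the counts above. The main obstacle is the parallel-axiom step: one must argue cleanly, via the $\pi$-to-splash projection, that for $U\notin\ell$ exactly one of the $q+1$ \orsls\ through $U$ in $\ST$ avoids $\ell$; I would organize this by translating "$t_Q$ meets $t_P$" back into a statement about whether two lines of $\pi$ concur at a point other than $T$, where it becomes transparent that the single exception is the line of $\pi$ through $P$ and $T$. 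An alternative, perhaps slicker, route would be to exhibit an explicit bijection between $\mathcal I$ and $\AG(2,q)$ using the coordinatisation available once $\pi$ is set up as an image of $\PG(2,q)$ under $\PGL(3,q^3)$, but the axiomatic verification above avoids coordinates entirely and is self-contained given the earlier results.
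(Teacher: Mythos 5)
Your proposal is correct, and it uses the same key input as the paper (Lemma~\ref{numberofsublines}), but it finishes by a genuinely different route. The paper stops after the unique-line property: since each line of $\mathcal I$ (an \orsl\ with $T$ removed) has exactly $q$ points, $\mathcal I$ is a $2$-$(q^2,q,1)$ design, and the paper then invokes the standard combinatorial fact that any such design is an affine plane of order $q$ — the parallel axiom comes for free from the parameters. You instead verify the axioms directly; the only nontrivial one is Playfair, and your translation through the bijection $P\mapsto t_P$ is sound: $t_P$ and $t_Q$ share a non-centre splash point if and only if the line $PQ$ of $\pi$ misses $T$ (because through a non-centre splash point there is a unique line of $\pi$), so among the $q+1$ lines $t_Q$, $Q\in u$, through $U$ the unique one disjoint from $\ell=t_P$ in $\mathcal I$ is $t_{Q_0}$ with $Q_0=u\cap PT$, which exists and is unique since $PT\neq u$. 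What the paper's route buys is brevity (a two-line proof) at the price of citing a design-theoretic characterization; what yours buys is a self-contained verification plus extra structural information the paper does not record, namely that parallelism in $\mathcal I$ corresponds to collinearity with $T$ in $\pi$, so the $q+1$ parallel classes of $\mathcal I$ correspond exactly to the $q+1$ lines of $\pi$ through $T$. The only blemish is the garbled clause in the middle of your parallel-axiom paragraph (the sentence introducing the auxiliary point before ``more carefully''), but the corrected criterion you state immediately afterwards is the right one, so there is no mathematical gap.
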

\begin{proof} 
By Lemma~\ref{numberofsublines}(\ref{tgtsplash2pts}), two points of $\mathcal
I$ lie in a unique line of $\mathcal I$. Hence $\mathcal I$ is a
2-$(q^2,q,1)$ design and so is an affine plane of order $q$. 
Further, by \cite[Theorem 4.10]{hughes}, the design consisting of the \orsls\ of $\li$ through $T$ is the affine geometry AG$(3,q)$. Moreover, $\mathcal I$ is an affine subplane of AG$(3,q)$, and so $\mathcal I$ is Desarguesian.
\end{proof} 

\section[Group properties]{Group properties of tangent subplanes and
  tangent splashes}\Label{sect:group}

In this section we study the collineation group of $\PG(2,q^3)$ fixing
$\li$ and show it is transitive on \orsps\ tangent to $\li$, and hence transitive
on tangent splashes on $\li$. 
Thus all tangent subplanes have the same geometrical and group properties,
and to prove results about tangent subplanes in general, we may
coordinatise a particular tangent subplane, and work with that. We begin by showing that a tangent splash has a unique centre.

\begin{theorem}\Label{uniquecentre} Let $\pi$ be an \orsp\ with tangent splash $\ST$ on  tangent line $\ell$ and centre $T$. 
Then any \orsp\ tangent to $\ell$ that has tangent splash $\ST$ on $\ell$ has centre $T$, that is, 
 a tangent splash has a unique centre.
\end{theorem}

\begin{proof}
Let $G$ be the full stabilizer of $\ST$ in the group $\PGL(3,q^3)_\ell$. Then $G$ contains a subgroup $I$ of central collineations with centre $T$ that fixes $\pi$, and we note that $|I|=q^2(q-1)$.
The subgroup of order $q^2$ in $I$ is semiregular on the $q^2$ points of $\ST\setminus\{T\}$, so these points form an orbit in $I$. 

Suppose that $\pi'$ is an \orsp\ tangent to $\ell$ with tangent splash $\ST$, and centre $T'\neq T$. Then the corresponding group of central collineations $I'$ cannot fix $T$. Hence the $q^2+1$ points of $\ST$ must be an orbit of $G$, hence $q^2+1$ must divide $|\PGL(3,q^3)_\ell|=q^9(q^3-1)^2(q^3+1)$. That is, $q^2+1$ divides $(q^3-1)^2(q^3+1)$, and so any odd prime divisor of $q^2+1$ must divide $(q^2+q+1)(q^3+1)$, a contradiction. 
Hence $T'=T$ as required. 
\end{proof}

The next result is the main result of this section, it shows that all tangent splashes are projectively equivalent. 

\begin{theorem} \Label{transitiveontangentsubplanes}
The subgroup of $\PGL(3,q^3)$ acting on $\PG(2,q^3)$ and fixing a line $\ell$  is transitive on all the \orsps\  meeting $\ell$ in a point, and hence is transitive on the tangent splashes of $\ell$. \end{theorem}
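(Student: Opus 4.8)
The statement asserts that the stabiliser of $\li$ in $\PGL(3,q^3)$ acts transitively on the set of order-$q$-subplanes meeting $\li$ in exactly one point. The plan is to exhibit one concrete tangent \orsp\ $\pi_0$, and then show that any other tangent \orsp\ $\pi$ can be mapped to $\pi_0$ by a homography fixing $\li$. Since $\PGL(3,q^3)$ is transitive on the subplanes $\PG(2,q)$ of $\PG(2,q^3)$ (by definition an \orsp\ is an image of $\PG(2,q)$), the only issue is the side condition that $\li$ be fixed; so the real content is a counting/orbit argument rather than an explicit construction.

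\textbf{Step 1: count tangent \orsps\ through a fixed pair $(T,\ell)$.} Fix the point $T\in\li$ and fix one secant line $\ell$ of $\PG(2,q^3)$ through $T$. First I would determine how many tangent \orsps\ $\pi$ have $\pi\cap\li=\{T\}$ and have $\ell$ as one of their lines; equivalently, how many sub-$\PG(2,q)$'s contain the chosen \orsl\ $b=\ell\cap\pi$ on $\ell$ through $T$. This is a standard count: an \orsp\ is determined by a frame (a quadrilateral), and one can count quadrilaterals whose first side lies on the prescribed \orsl\ $b\subset\ell$, divided by the order of the stabiliser of the subplane in $\PGL(3,q^3)$, namely $|\PGL(3,q)|$. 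Alternatively, use the known fact (or re-derive it) that the number of \orsps\ of $\PG(2,q^3)$ is $|\PGL(3,q^3)|/|\PGL(3,q)|$, and distribute these over incidence with $\li$.

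\textbf{Step 2: compare orbit size with the total count.} Let $G$ be the stabiliser of $\li$ in $\PGL(3,q^3)$; it is transitive on points of $\li$, on secant lines through a given point of $\li$, and on \orsls\ of a given secant line through a given point of $\li$ (all of these are elementary, using that $\PGL(2,q^3)$ acts 3-transitively on a line and the appropriate induced actions). Combining this with Step 1, $G$ is transitive on triples (point $T$ of $\li$, secant $\ell$ through $T$, \orsl\ $b\subset\ell$ with $T\in b$), so the $G$-orbit of $\pi_0$ has size equal to (number of such triples) $\times$ (number of tangent \orsps\ containing a fixed such triple). I would then check this product equals the total number of tangent \orsps\ of $\PG(2,q^3)$, which forces the single orbit. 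Getting the total count of tangent \orsps\ can be done directly: each of the $|\PGL(3,q^3)|/|\PGL(3,q)|$ subplanes meets $\li$ in $0$, $1$, or $q+1$ points, and one counts the incidences (subplane, point of $\li$) two ways, or simply computes the number of tangent ones from the flag count in Step 1 times $|\li|$ divided by the number of \orsls\ of a tangent \orsp\ meeting $\li$ (which is $q+1$, the lines of $\pi$ through $T$).

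\textbf{Main obstacle.} The delicate point is Step 1: correctly counting the tangent \orsps\ through a prescribed centre, secant and \orsl, and bookkeeping the stabiliser orders so that the numbers match exactly. One must be careful that a tangent \orsp\ $\pi$ has exactly one line through $T$ that "is" the fixed secant $\ell$ (namely the line $PT$ for $P=\pi\cap\ell$ an affine point), and that distinct affine points $P$ of $\pi$ give distinct such secants — this is where Lemma~\ref{numberofsublines} and the structure of $\pi$ feed in. An alternative, cleaner route that sidesteps some of this counting is to argue directly: given two tangent \orsps\ $\pi,\pi'$, first move $\pi'\cap\li$ to $\pi\cap\li=T$ by an element of $G$; then we may assume both pass through $T$; now pick affine points and lines forming frames for $\pi$ and $\pi'$ that are "compatible" at $T$ (each using the tangent line at $T$ as part of the frame), and use the fact that $\PGL(3,q^3)$ is transitive on frames to send one frame to the other — one then checks the homography realising this also fixes $\li$ because it fixes $T$ and fixes the tangent line, which together with the frame condition pins down its action on $\li$. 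Either way, the essential work is verifying that the frame-moving map can be chosen inside $G$; I expect the counting argument to be the safer one to write down rigorously.
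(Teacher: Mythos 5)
There is a genuine gap in your main (counting) route, and it sits exactly where the real work of the theorem lies. Knowing that $G$ (the stabiliser of $\li$) is transitive on triples $(T,\ell,b)$ and knowing how many tangent \orsps\ contain a fixed triple does \emph{not} determine the size of the $G$-orbit of $\pi_0$: your formula ``orbit size $=$ (number of triples) $\times$ (number of tangent \orsps\ per triple)'' is not an orbit size at all. It is a flag count, and since each tangent \orsp\ contains $q+1$ such triples (one for each of its lines through $T$), that product equals $(q+1)$ times the total number of tangent \orsps, so your final ``check'' would not balance; more importantly, the inference from triple-transitivity to the orbit containing \emph{all} \orsps\ through each triple is unjustified. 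To make an argument of this shape work you must either compute the order of the stabiliser in $G$ of one tangent \orsp\ (so that orbit-stabiliser plus the count of tangent \orsps\ forces one orbit), or prove that the stabiliser of a triple acts transitively on the $q^2(q^2-1)$ tangent \orsps\ containing it -- and the latter is essentially the theorem again. This is precisely the ingredient the paper supplies and you omit: it exhibits the stabiliser $I=G_{\pi,\ell}$ explicitly as the matrices $\begin{pmatrix}a&b&c\\0&1&0\\0&0&1\end{pmatrix}$, $a\neq 0$, so $|I|=q^2(q-1)$, computes $|H|=|G_\ell|$, and matches $|H|/|I|$ against the number of tangent \orsps\ to a line obtained by double counting (Lemma~\ref{countinglemma}).

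Your ``cleaner'' alternative route has a concrete flaw as well: after arranging both subplanes to be tangent at the same point $T$ and choosing frames containing $T$, the homography carrying one frame to the other maps $\pi$ to $\pi'$ and fixes $T$, but it only maps $\li$ to \emph{some} line tangent to $\pi'$ at $T$; there are $q^3-q$ such tangent lines through $T$, so fixing $T$ (even together with the frame condition) does not pin $\li$ down. The repair is to compose with an element of the stabiliser of $\pi'$ taking that image line back to $\li$, which requires knowing that the stabiliser of an \orsp\ is transitive on its tangent lines -- this is exactly part~\ref{Gextsubgp} of the paper's Lemma~\ref{grouplemma}, proved there by an explicit orbit computation. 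As written, neither of your routes closes this gap.
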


\begin{proof}
Firstly, note that any collineation of $\PGL(3,q^3)$ fixing pointwise three points on a line, fixes the entire line, and hence is a central collineation.  The result also holds for a collineation fixing linewise three lines through a point.
 Without loss of generality we prove the result for the \orsp\ 
$\pi=\PG(2,q)$ of $\PG(2,q^3)$. 
Fix any point $T$ of $\pi$ and let $\ell(T)$ be the set of lines of $\PG(2,q^3)$ that are tangent to $\pi$ at $T$. As $\PGL(3,q^3)$ acts faithfully on $\PG(2,q)$, we have $\PGL(3,q^3)_\pi=\PGL(3,q)$. Let $G$ be the subgroup $\PGL(3,q)_T$. 
 As $\PGL(3,q)$ is point transitive on the $q^2+q+1$ points of $\pi$, the orbit-stabilizer theorem gives $|G|=|\PGL(3,q)|/(q^2+q+1)=q^3(q^2-1)(q-1)$. 

Let $I$ be the subgroup of $G$ of central collineations with centre $T$, so $|I|=q^2(q-1)$. Fix a line $\ell$ of $\ell(T)$, and consider the group $H=G{_\ell}=\PGL(3,q)_{T,\ell}$.  We will show that $H=I$.  As $I$ fixes all the lines of $\pi$ through $T$, we have $H\geq I$. Note that $|G/I|=q(q+1)(q-1)$, so any collineation in $H$ has order dividing this number. We will prove: (1) the only central collineations in $H$ are in $I$, and so any collineation in $H$ fixing three points on $\ell$ or three lines through $T$ is in $I$; (2) No collineation in $H\backslash I$ has order $p\mid q$; (3) No collineation in $H\backslash I$ has odd order dividing $q+1$; and (4) Any Sylow 2-group of $H$ lies in $I$.  From these four assertions we conclude that $H=I$.

 To prove (1), if $\alpha$ is a central collineation in $H$, then for each non-fixed point $P\in\pi$, the line $PP^\alpha$ is a fixed line.  So the centre, and hence the axis, of $\alpha$ lies in $\pi$ and so $\alpha\in I$.  For (2), if $\alpha\in H$ has prime order $p\mid q$ then as $\alpha$ fixes $\ell$ and gcd$(p,q+1)=1$ and gcd$(p,q^3-q-1)=1$ it follows that $\alpha$ fixes at least one line of $\pi$ through $P$ and another line of $\ell(T)$ (other than $\ell$), and so $\alpha$ fixes three lines through $T$, and by the note above, $\alpha\in I$.  For (3), if $\alpha\in H\backslash I$ has odd prime order $x\mid q+1$ then gcd$(x,q^3)=1$, $\alpha$ fixes another point $Q\ne P$ on $\ell$, and as $x$ is odd, gcd$(x,q^^3-1)=1$ so $\alpha$ fixes at least three points of $\ell$, and so $\alpha\in I$. For (4), first consider a Sylow $x$-subgroup $S$ of $H$ not lying in $I$, for $x=2$.  From (2), we may assume that $q$ is odd. $S$ fixes another point on $\ell$ and since $S\not\subseteq I$, it acts semi-regularly on the remaining $q^3-1$ points of $\ell$.  Thus $|S|\mid q^3-1$.  As $q^3-1$ is an even number $q-1$ times an odd number $q^2+q+1$, it follows that $|S|\mid q-1$, and so $S\subseteq I$, a contradiction.  Thus $x$ is odd, and by (3)  $x\nmid q+1$, and using (2) we have $x\mid q-1$.  Now consider the action of a Sylow $x$-subgroup of $H/I$ on the lines through $T$.  As there are $q+1$ lines of $\pi$ through $T$ and $(q+1)\bmod (q-1)=2$, it follows that $U$ fixes $\ell$ and at least two lines through $T$, and hence all the lines through $T$, a contradiction.  Thus we have proved (1) -- (4) and so $H=I$.

Thus we have shown that $I=\PGL(3,q^3)_{\pi,\ell}=\PGL(3,q)_\ell$.
As $|\ell(T)|=q^3-q=|G/I|$, the semiregular group $G/I$ is regular. Thus $G$ is transitive on the $q^3-q$ lines in $\ell(T)$, and $G$ faithfully induces the regular group $G/I$ on $\ell(T)$. 

Let $\pi=\PG(2,q)$ and $\pi'$ be two \orsps\ meeting $\ell$ in $T,T'$ respectively. Let $m,n$ be lines in $\pi$ through $T$, and choose a quadrangle $A,B\in m\setminus\{T\}$, $C,D\in n\setminus\{T\}$ in $\pi$. 
In a similar way, choose a quadrangle $A'B'C'D'$ in $\pi'$, so that $A'B'\cap\ell=C'D'\cap\ell=T'$.
So we can apply a collineation $\phi:A'B'C'D' \mapsto ABCD$, and $\phi$ maps $\pi'$ to $\pi$ and $T'$ to $T$. Now $\phi$ maps the line $\ell$ to a line $m\in\ell(T)$. So the result follows from the fact that $G/I$ is regular on $\ell(T)$. 
\end{proof}

The following corollary follows directly from this proof, and is useful in the rest of the article.

\begin{corollary}\Label{G9} Let $\pi$ be an \orsp\ tangent to $\ell$ in $\PG(2,q^3)$. The subgroup 
$I=\PGL(3,q^3)_{\pi,\ell}$ fixes the tangent splash of $\pi$ and is transitive
  on the non-centre points of this splash. Further, $I$
fixes linewise the lines of $\pi$ through
  $T=\pi\cap\ell$,  is transitive on the points of $\pi\bs\{T\}$ on
  these lines, and is transitive on the lines of $\pi$ not through $T$.
  \end{corollary}
  
  It will also be useful to study the group of collineations that fixes an \orsp, and determine the orbits of points and lines of $\PG(2,q^3)$ in this group. 
  
  \begin{lemma}\Label{Gextsubgp}
  Let $\pi$ be an \orsp\ of $\PG(2,q^3)$, and let $K=\PGL(3,q^3)_\pi$. Then $K$  has three orbits on the points of
  $\PG(2,\r^3)$: the points of $\pi$, the points of
  $\PG(2,q^3)\bs\pi$ that lie on a line of $\pi$,  and the points
   that do not lie on a line of $\pi$. $K$ has
  three line orbits: lines of $\pi$, lines of $\PG(2,\r^3)$ tangent to
  $\pi$, and lines of $\PG(2,\r^3)$ exterior to $\pi$.
  \end{lemma}
  
  \begin{proof}
   Without loss of generality, we prove this for the \orsp\ $\pi=\PG(2,q)$, so $K=\PGL(3,q^3)_\pi=\PGL(3,q)$.
  We will show that $K$ is transitive on the set of lines exterior to $\pi$. To do this we use the exterior splash of $\pi$ onto an exterior line. 
Let $\ell,m$ be two lines exterior to $\pi$. 
Let $\Sl$ be the exterior splash of $\pi$ onto $\ell$, and let $\Sm$ be the exterior splash of $\pi$ onto $m$. Recall that an exterior splash has size $q^2+q+1$.  

Suppose first that  $P=\ell\cap m$ lies in both $\Sl$ and $\Sm$. There is a unique line $\ell_P$ of $\pi$ through $P$. The group of central collineations $H<K$ with axis $\ell_P$ has order $q^2(q-1)$ and $H$ is semiregular on the exterior lines through $P$. However, the number of lines exterior to $\pi$ through $P$ is also $q^2(q-1)$. Hence if $P\in\Sl \cap\Sm$, then all the exterior lines through $P$ are in the same $K$-orbit.

Now suppose  $P=\ell\cap m$ is not in $\Sl \cap\Sm$. Through any point $L\in\Sl$, there are $q^2$ tangents and one secant line of $\pi$. So there is an exterior line $t$ through $L$ that contains a point $M$ of $\Sm$ (as there are $(q^2+q+1)-(q^2+1) >0$ choices for $M$).   By the above argument, $\ell,t$ lie in the same orbit of $K$, and $m,t$ lie in the same orbit of $K$. 
Hence $K$ is transitive on the set of lines exterior to $\pi$. Note that $K$ is transitive on the points of $\pi$ and using results from the proof of Theorem~\ref{transitiveontangentsubplanes}, we conclude that $K$ is transitive on lines tangent to $\pi$.
\end{proof}

  We conclude this section with the next lemma which contains some elementary counting results in $\PG(2,q^3)$. It is useful to have these stated for this article. The proof is straightforward. 
  
  \begin{lemma} \Label{countinglemma}  Let $\pi$ be an \orsp\ in $\PG(2,q^3)$:
\begin{enumerate}
\item \Label{G0} There are $\r^6(\r^6+\r^3+1)(\r^2-\r+1)(\r^2+\r+1)$ \orsps.
\item \Label{G2} $\pi$ has $\r^2+\r+1$ secant lines, $(\r^2+\r+1)\r(\r^2-1)$ tangent lines, and  $\r^3(\r^2-1)(\r-1)$ exterior lines. 
\item \Label{G3} Through each point $P\notin\pi$ there are either one secant line and $\r^2$ tangent lines, or $\r^2+\r+1$ tangent lines.  
\item \Label{G4} The number of \orsps\  tangent to a given line is $\r^7(\r^3-1)(\r^3+1)(\r^2+\r+1)$.
\end{enumerate}
\end{lemma}

\section{Counting tangent splashes}\Label{sect:count}

In this section we count the number of tangent splashes on $\li$. Further we
investigate the number of points 
needed to determine a unique \orsp\ with a given tangent splash.

\begin{theorem}\Label{splashextsubplane}
Let $\ST$ be a tangent splash of $\li$ and let $\ell$ be an \orsl\ disjoint from
$\li$ lying on a line of $\PG(2,q^3)$ which meets
$\ST\backslash\{T\}$. Then there is a unique tangent \orsp\ that contains $\ell$
and has tangent splash $\ST$.
\end{theorem}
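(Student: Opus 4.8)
The plan is to prove existence and uniqueness together by an orbit-counting argument built on the transitivity results of Section~\ref{sect:group}. Write $T$ for the centre of $\ST$. Let $\mathcal X$ be the set of pairs $(\pi,m)$ where $\pi$ is a tangent \orsp\ with tangent splash $\ST$ and $m$ is a line of $\pi$ with $T\notin m$; let $\mathcal Y$ be the set of \orsls\ $\ell'$ disjoint from $\li$ whose extension meets $\li$ in a point of $\ST\setminus\{T\}$ (so $\ell\in\mathcal Y$); and let $\Phi\colon\mathcal X\to\mathcal Y$, $(\pi,m)\mapsto m$, be the forgetful map, which is well defined because a line of $\pi$ not through $T$ extends to meet $\li$ in a non-centre point of the splash. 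Since $\pi$ contains $\ell$ exactly when $\ell$ is a line of $\pi$, and such a $\pi$ is then tangent to $\li$ at $T$ with $\ell$ not through $T$, the theorem is precisely the assertion that $\Phi$ is a bijection. The group $H=G_{\li}$ acts on $\mathcal X$ and $\mathcal Y$, $\Phi$ is $H$-equivariant, and the relevant orbits are those of the subgroup $H_{\ST}$ stabilising $\ST$ together with its centre.

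First I would count the two sides. Directly $|\mathcal Y|=q^{2}\cdot q^{3}\cdot\bigl(q^{2}(q^{2}{+}q{+}1)(q^{2}{-}q{+}1)-q^{2}(q^{2}{+}q{+}1)\bigr)=q^{8}(q-1)(q^{2}+q+1)$, the three factors counting the non-centre points $U$ of $\ST$, the lines through $U$ other than $\li$, and the \orsls\ of such a line (a $\PG(1,q^{3})$) that avoid $U$. For $\mathcal X$ one needs the number $N$ of tangent \orsps\ with tangent splash exactly $\ST$, and then $|\mathcal X|=Nq^{2}$. By Lemma~\ref{grouplemma}(\ref{G7}) and Corollary~\ref{transcor}, $H$ is transitive on tangent \orsps\ and on tangent splashes, so $H_{\ST}$ is transitive on these $N$ \orsps\ with the stabiliser of one equal to $I=G_{\pi,\li}$, $|I|=q^{2}(q-1)$; hence $N=|H_{\ST}|/\bigl(q^{2}(q-1)\bigr)$. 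To evaluate $|H_{\ST}|$ I would use that the kernel $E$ of the action of $H$ on $\li\cong\PG(1,q^{3})$ is the group of perspectivities with axis $\li$, of order $q^{6}(q^{3}-1)$, and that $E\le H_{\ST}$ since it fixes $\li$ pointwise; thus $|H_{\ST}|=q^{6}(q^{3}-1)\,|\overline{H}_{\ST}|$, where $\overline{H}_{\ST}\le\PGL(2,q^{3})$ is the image. Coordinatising one convenient tangent \orsp\ (for example $\PG(2,q)$ tangent to the line $[0,1,\tau]$ at $(1,0,0)$) shows that, under the identification $\li\setminus\{T\}\cong\GF(q^{3})$, the set $\ST\setminus\{T\}$ is a $2$-dimensional $\GF(q)$-subspace $S$; so $\overline{H}_{\ST}$, which fixes $T$, lies in $\mathrm{AGL}(1,q^{3})=\{x\mapsto ax+b\}$ and stabilises $S$, giving $\overline{H}_{\ST}=\{x\mapsto ax+b\st aS=S,\ b\in S\}$. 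Since $\{a\st aS=S\}\cup\{0\}$ is a subfield of $\GF(q^{3})$ over which $S$ is a module and $\gcd(2,3)=1$, it equals $\GF(q)$; hence $|\overline{H}_{\ST}|=q^{2}(q-1)$, and by Corollary~\ref{transcor} the values $|H_{\ST}|=q^{8}(q-1)^{2}(q^{2}+q+1)$ and $N=q^{6}(q-1)(q^{2}+q+1)$ hold for every tangent splash. Therefore $|\mathcal X|=Nq^{2}=q^{8}(q-1)(q^{2}+q+1)=|\mathcal Y|$.

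Next I would show $\Phi$ is surjective, which gives existence. As $\Phi$ is equivariant and $\mathcal X$ is a single $H_{\ST}$-orbit --- by transitivity of $H_{\ST}$ on the $N$ \orsps\ together with transitivity of $I$ on the lines of $\pi$ not through $T$ (Lemma~\ref{grouplemma}(\ref{G9})) --- it is enough that $\mathcal Y$ be a single $H_{\ST}$-orbit. Now $H_{\ST}$ is transitive on the $q^{2}$ non-centre points of $\ST$ (via $I$, Lemma~\ref{grouplemma}(\ref{G9a})), so it suffices that $(H_{\ST})_U$ be transitive on the members of $\mathcal Y$ lying on lines through $U$: here $E\le(H_{\ST})_U$ is already transitive on the $q^{3}$ lines through $U$ other than $\li$, while on any one such line the perspectivities with axis $\li$ realise all of $\mathrm{AGL}(1,q^{3})$ (translations from the elations with centre $U$, multiplications from the homologies centred at an affine point of the line), and $\mathrm{AGL}(1,q^{3})$, having order $q^{3}(q^{3}-1)$ --- which equals the number of \orsls\ of $\PG(1,q^{3})$ missing a fixed point --- acts regularly, hence transitively, on those \orsls. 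With $\Phi$ surjective and $|\mathcal X|=|\mathcal Y|$, $\Phi$ is a bijection, and the theorem follows.

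The step I expect to be the main obstacle is the evaluation of $N$, equivalently of $|H_{\ST}|$: it hinges on the fact that a tangent splash with its centre removed is a $2$-dimensional $\GF(q)$-subspace of the affine line $\li\setminus\{T\}$, which must be read off from an explicit matrix computation on one tangent \orsp\ and then transported to every tangent splash using Corollary~\ref{transcor}. A secondary technical point is the transitivity of $(H_{\ST})_U$ on the admissible \orsls\ through $U$, which relies on the structure of the perspectivity group with axis $\li$ and on the numerical coincidence that makes $\mathrm{AGL}(1,q^{3})$ regular on the sublines of $\PG(1,q^{3})$ avoiding a point.
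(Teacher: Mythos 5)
Your route is genuinely different from the paper's. The paper reduces, via Corollary~\ref{transcor} and Lemma~\ref{grouplemma}, to the standard splash and the standard subline $\ell=\{(0,c,1)\st c\in\GF(q)\cup\{\infty\}\}$, and then shows by a direct coordinate computation that any \orsp\ containing $T$ and $\ell$ whose splash is $\ST$ must be $\PG(2,q)$; existence comes for free from the defining subplane. Your argument is a global double count combined with an orbit argument; if completed it would in addition deliver the count $N=q^6(q^3-1)$ of Theorem~\ref{notgtsplash}(\ref{C-3}) as a byproduct rather than as a consequence of this theorem, and your cardinality computations ($|\mathcal Y|=q^8(q^3-1)$, $|E|=q^6(q^3-1)$, $N=|H_{\ST}|/|I|$, the two-step transitivity of $E$ on the admissible sublines through a fixed non-centre point) all check out.

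There is, however, one step you assert without any justification and on which the whole count rests: that $\overline{H}_{\ST}$, i.e.\ the setwise stabiliser of $\ST$ induced on $\li$, fixes $T$. This is essentially the statement that a tangent splash has a unique centre, which in the paper is Theorem~\ref{uniquecentre} and is only obtained in Section~\ref{sect:sublinesrevisited} via the substantial coordinate argument of Theorem~\ref{nos6}. At the point where Theorem~\ref{splashextsubplane} sits, nothing yet rules out an element of $\PGL(2,q^3)$ permuting $\ST$ while moving $T$; if such an element existed, $|H_{\ST}|$ and hence $N$ would be larger and your equality $|\mathcal X|=|\mathcal Y|$ would fail, so the step is essential rather than cosmetic. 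It can be repaired in two ways: either forward-cite Theorem~\ref{uniquecentre} (its proof does not use Theorem~\ref{splashextsubplane}, so this is only an ordering issue), or, more cleanly, replace $H_{\ST}$ throughout by the stabiliser of the pair $(\ST,T)$ and let $\mathcal X$ consist of pairs $(\pi,m)$ with $\pi$ tangent to $\li$ \emph{at $T$}: the fibre argument still applies (a collineation carrying one such $\pi$ to another carries tangent point to tangent point, hence fixes $T$), your identification of the induced group with $\{x\mapsto ax+b\st aS=S,\ b\in S\}$ of order $q^2(q-1)$ goes through verbatim, $E$ and $I$ lie in the pair-stabiliser, and the resulting bijection proves exactly the statement the paper uses in Theorem~\ref{notgtsplash}, namely uniqueness among \orsps\ tangent at $T$. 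A second, smaller, gap: you deduce that $\mathrm{AGL}(1,q^3)$ acts regularly on the \orsls\ of $\PG(1,q^3)$ avoiding a fixed point from the coincidence of orders alone; equal cardinalities give regularity only once stabilisers are known to be trivial. That is easy to supply: normalising the subline to $\GF(q)\cup\{\infty\}$, the point avoided becomes some $x_0\in\GF(q^3)\setminus\GF(q)$, and an element of the subline's stabiliser $\PGL(2,q)$ fixing $x_0$ gives $cx_0^2+(d-a)x_0-b=0$ with $a,b,c,d\in\GF(q)$, forcing $c=b=0$, $a=d$ since $x_0$ has degree $3$ over $\GF(q)$. With these two repairs your argument is sound.
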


\begin{proof}
We prove this result using coordinates. Everywhere else  in this article we let the
line at infinity $\li$ of $\PG(2,q^3)$ have homogeneous coordinates $[0,0,1]$. Here we give the line at
infinity 
the coordinates $[0,1,\tau]$, and to avoid confusion, denote it
by
$\li'$. Consider the \orsp\ $\pi=\PG(2,q)$, it is tangent to $\li'$
at the point $T=(1,0,0)$. The lines of $\pi$ not through $T$ have
coordinates $[1,r,s]$, $r,s\in\GF(q)$, and meet $\li'$ in the points
$(s-r\tau,\tau,-1)$. So the tangent splash of $\pi$ onto $\li'$ is
$\ST=\{T=(1,0,0)\}\cup\{(s-r\tau,\tau,-1)\st r,s\in\GF(q)\}$. Note that the
set $\ell=\{(0,c,1)\st c\in\GF(q)\cup\{\infty\}\}$ is an \orsl\ of $\pi$ that
is disjoint from $\li'$. 

By Theorem~\ref{transitiveontangentsubplanes} and Corollary~\ref{G9}, without loss
of generality we only need prove the theorem holds for the splash
$\ST=\{T=(1,0,0)\}\cup\{(s-r\tau,\tau,-1)\st r,s\in\GF(q)\}$ and the \orsl\
$\ell=\{(0,c,1)\st c\in\GF(q)\cup\{\infty\}\}$. It suffices to show
that the only \orsp\ containing $\ell$ with tangent splash $\ST$ is
$\pi=\PG(2,q)$. 

Any point in an \orsp\ containing $T$ and $\ell$ will be on a line joining
$T$ to a point of $\ell$. Such a point has coordinates $(\alpha,1,0)$ or
$(\alpha,c,1)$ for $\alpha\in\GF(q^3)$, $c\in\GF(q)$. 
Consider one such point $X=(\alpha,d,1)$ for some $\alpha\in\GF(q^3)$,
$d\in\GF(q)$. The set of points $T,X,\ell$ contains a quadrangle and so lie
in a unique \orsp\ $\pi'$. We show that $\pi'$ has splash
$\ST=\{T=(1,0,0)\}\cup\{(s-r\tau,\tau,-1)\st r,s\in\GF(q)\}$ if and only if
$\alpha\in\GF(q)$, and so $\pi'=\PG(2,q)$.

Let $m$ be
 the unique \orsl\ determined by
$T=(1,0,0)$, $X=(\alpha,d,1)$, and $(0,d,1)$.
The homography with matrix
$$
\begin{pmatrix}
\alpha&0&0\\0&d&0\\0&1&1
\end{pmatrix}
$$
maps the \orsl\ $\{(a,1,0)\st a\in\GF(q)\cup\{\infty\}\}$ onto the \orsl\ $m$, so 
$m$   has points $X_a=(a\alpha,d,1)$ where $a\in\GF(\r)\cup\{\infty\}$. 
The line joining $X_a$ to the point $(0,c,1)$ of $\ell$
meets the line $\li'$ in the point
$(-a\alpha\tau-a\alpha c,-\tau(d-c),d-c)$. As $a,c,d\in\GF(q)$, this point belongs to the
tangent splash $\ST$ only if $\alpha\tau+\alpha c=s-r\tau$ for some $r,s\in\GF(q)$. 
Writing $\alpha=\alpha_0+\alpha_1\tau+\alpha_2\tau^2$ for unique $\alpha_0,\alpha_1,\alpha_2\in\GF(\r)$, we get
\begin{eqnarray*}
\alpha\tau+\alpha c=
(\alpha_2t_0+c\alpha_0)+(\alpha_0 +\alpha_2 t_1+c\alpha_1)\tau+(\alpha_1 +\alpha_2 t_2+c\alpha_2)\tau^2.
\end{eqnarray*}
We want the coefficient of $\tau^2$ to be zero for all
$c\in\GF(q)$, that is, $\alpha_1 +\alpha_2 t_2+c\alpha_2=0$. Setting 
$c=-t_2$ gives $\alpha_1=0$, and setting $c=-t_2+1$ gives $\alpha_2=0$.
 Hence $\alpha=\alpha_0$ is in $\GF(q)$, and so the point
$X_a$ lies in $\PG(2,q)$ for all $a\in\GF(q)$. 
Hence the \orsp\ $\pi'$ has splash $\ST$ if and only if $\pi'=\PG(2,q)$. Hence
 there is a unique \orsp\ (namely
 $\PG(2,q)$)
that contains $T$, $\ell$, and has tangent splash $\ST$.
\end{proof}

In \cite{barwtgt2}, it is shown how to  geometrically construct the unique \orsp\
from a tangent splash of $\li$ and an \orsl\ disjoint from  $\li$.

We now count the number of tangent splashes in $\PG(2,q^3)$.

\begin{theorem}\Label{notgtsplash}
\begin{enumerate}
\item \Label{C-3} The number of  tangent \orsps\  with a given tangent splash is $\r^6(\r^3-1)$.
\item \Label{C-2} The number of tangent splashes on $\li$ is $\r(\r^3+1)(\r^2+\r+1)$.
\item \Label{C1} The number of tangent splashes of $\li$ with a common centre is $\r(\r^2+\r+1)$.
 \end{enumerate}
\end{theorem}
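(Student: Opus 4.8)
The plan is to prove the three counts in the stated order, each following from the previous via a counting-in-two-ways or orbit--stabiliser argument, with the bulk of the real work already done in Sections~\ref{sect:group} and~\ref{sect:count}. For part~\ref{C-3}, I would count incident pairs $(\pi,\ST)$ where $\pi$ is a tangent \orsp\ and $\ST$ is its tangent splash; but more directly, I would use the transitivity statements already proved. By Corollary~\ref{transcor} the group $H=G_\li$ is transitive on tangent splashes of $\li$, and by Lemma~\ref{grouplemma}(\ref{G7}) it is transitive on \orsps\ tangent to $\li$; fixing a tangent splash $\ST$, the stabiliser $H_\ST$ acts on the tangent \orsps\ having splash $\ST$, and I claim this action is transitive. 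To see this, take two such \orsps\ $\pi,\pi'$; by Theorem~\ref{splashextsubplane} each is the unique tangent \orsp\ with splash $\ST$ containing a given \orsl\ disjoint from $\li$ on a secant line, so it suffices to find an element of $H_\ST$ carrying a disjoint \orsl\ of $\pi$ to one of $\pi'$ — and transitivity of $H$ on tangent \orsps\ (part~\ref{G7}), combined with the fact that the splash is an invariant, gives exactly this. Then the number of tangent \orsps\ with splash $\ST$ is $|H_\ST|/|H_{\pi,\ST}|$. Now $H_{\pi,\ST}=G_{\pi,\li}=I$, so $|H_{\pi,\ST}|=|I|=q^2(q-1)$ by Lemma~\ref{grouplemma}(\ref{G6}); and $|H_\ST|=|H|/(\text{number of tangent splashes})$. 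Rather than chase this circularly, I would instead compute directly: the number of tangent \orsps\ with splash $\ST$ equals $|H|/\bigl(|I|\cdot(\text{number of splashes})\bigr)$ — so parts~\ref{C-3} and~\ref{C-2} are really one computation, and I set it up as follows.

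For part~\ref{C-2}, count pairs $(\pi,\ST)$ with $\pi$ tangent to $\li$ and $\ST$ its splash. There are $q^7(q^3-1)(q^3+1)(q^2+q+1)$ \orsps\ tangent to $\li$ by Lemma~\ref{countinglemma}, each with exactly one tangent splash; and each tangent splash is the splash of exactly $N$ \orsps\ where $N$ is the number from part~\ref{C-3}. So (number of splashes) $= q^7(q^3-1)(q^3+1)(q^2+q+1)/N$. Independently, orbit--stabiliser applied to the $H$-action on tangent \orsps\ tangent to $\li$ (transitive, by part~\ref{G7}) with stabiliser $I$ (order $q^2(q-1)$) and to the $H$-action on splashes (transitive, by Corollary~\ref{transcor}) with stabiliser $H_\ST$ gives $N=|H_\ST|/|I|$ and (number of splashes) $=|H|/|H_\ST|$; multiplying, $N\cdot(\text{number of splashes}) = |H|/|I| = q^9(q^3-1)^2(q^3+1)/(q^2(q-1)) = q^7(q^3-1)(q^3+1)$. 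Combined with the two-way count above, $(\text{number of splashes}) = q^7(q^3-1)(q^3+1)(q^2+q+1)/N$ and $N\cdot(\text{number of splashes})=q^7(q^3-1)(q^3+1)$ force $(\text{number of splashes})^2 = \ldots$ — cleaner is: from $N\cdot M = q^7(q^3-1)(q^3+1)$ and $N\cdot M = q^7(q^3-1)(q^3+1)(q^2+q+1)/M \cdot M$... I would streamline by simply noting $M = |H|/|H_\ST|$ and $N=|H_\ST|/|I|$, so $M = (|H|/|I|)/N = q^7(q^3-1)(q^3+1)/N$; then the two-way count gives $M = q^7(q^3-1)(q^3+1)(q^2+q+1)/N$ only if I have double-counted, so in fact the two-way count directly yields $M\cdot N = q^7(q^3-1)(q^3+1)(q^2+q+1)$ while orbit--stabiliser yields $M\cdot N = q^7(q^3-1)(q^3+1)$ — these disagree, signalling that I must recount. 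The resolution: the correct identity is $M = q^7(q^3-1)(q^3+1)(q^2+q+1)/N$ from two-way counting alone, and separately I compute $N$ from part~\ref{C-3}'s stabiliser analysis as $N = q^6(q^3-1)$; substituting gives $M = q(q^3+1)(q^2+q+1)$, which is part~\ref{C-2}. So the logical order is: (i) prove $N = q^6(q^3-1)$ for part~\ref{C-3} via orbit--stabiliser on the $H_\ST$-action (transitive by Theorem~\ref{splashextsubplane}, stabiliser related to $I$), then (ii) deduce $M$ by the two-way count.

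For part~\ref{C-3} itself, the cleanest route is: fix $\ST$ with centre $T$; by Theorem~\ref{splashextsubplane}, each tangent \orsp\ with splash $\ST$ is determined by a choice of \orsl\ $\ell$ disjoint from $\li$ lying on a secant line through a non-centre point of $\ST$, but distinct \orsls\ can give the same \orsp, so I count instead $\{(\pi,\ell)\st \pi \text{ tangent, splash }\ST,\ \ell\subset\pi,\ \ell\cap\li=\emptyset\}$. An \orsp\ $\pi$ of order $q$ tangent to $\li$ has $q^2$ lines not through $T$, each meeting $\ST$ in a non-centre point, and on each such line the \orsls\ of $\pi$ disjoint from $\li$ number $q^3-q$ minus those meeting $\li$... actually an \orsl\ of $\pi$ lying on such a line: the line of $\pi$ has $q+1$ points, $q$ affine, and is itself an \orsl\ of $\pi$ — wait, each line of $\pi$ IS an \orsl, and there is exactly one \orsl\ of $\PG(2,q^3)$ on each line of $\pi$ that is contained in $\pi$. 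So $\pi$ contains exactly $q^2$ \orsls\ disjoint from $\li$ (its $q^2$ lines not through $T$). Hence the pair-count is $N\cdot q^2$ from the $\pi$ side. From the $\ell$ side: the number of \orsls\ disjoint from $\li$ on a fixed secant line $n$ through a non-centre point $U\in\ST$ that arise from some tangent \orsp\ with splash $\ST$ — by Theorem~\ref{splashextsubplane} each such $\ell$ gives a unique $\pi$. The non-centre points number $q^2$; on each, the secant lines and the admissible \orsls\ must be enumerated, giving the other side of the count and hence $N$. I expect the main obstacle to be precisely this last enumeration — pinning down exactly which \orsls\ on which secant lines occur, and ensuring no over- or under-counting when an \orsp\ is recovered from several of its disjoint \orsls\ — and I would handle it by working in the coordinate model $\pi=\PG(2,q)$, $\li'=[0,1,\tau]$ set up in the proof of Theorem~\ref{splashextsubplane}, where the disjoint \orsls\ of $\PG(2,q)$ and the group $I$ acting on them are completely explicit. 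Part~\ref{C1} is then immediate: the tangent splashes through a fixed centre $T$ are the $H_T$-orbit (where $H_T$ is the stabiliser in $H$ of the point $T\in\li$), and since all $q^2+q+1$ points of a given splash are equivalent under... rather, simply divide: total splashes $q(q^3+1)(q^2+q+1)$, each has a unique centre on $\li$, and by the transitivity of $G_\li$ on points of $\li$ (or on flags) the centres are equidistributed among the $q^3+1$ points of $\li$, giving $q(q^3+1)(q^2+q+1)/(q^3+1) = q(q^2+q+1)$ splashes per centre.
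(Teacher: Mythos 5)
There is a genuine gap, and it sits exactly where you yourself flag ``the main obstacle'': part~\ref{C-3} is never actually proved. Your double count of pairs $(\pi,\ell)$ is the right idea (and is the paper's argument), and the $\pi$-side ($N\cdot q^2$, since an \orsp\ with splash $\ST$ has exactly $q^2$ lines not through $T$) is fine. But you leave the $\ell$-side as an unresolved enumeration of ``exactly which \orsls\ on which secant lines occur'', to be sorted out later in coordinates, and you never carry it out; the value $N=q^6(q^3-1)$ is only ever \emph{asserted} (attributed to a ``stabiliser analysis'' that you had already abandoned as circular, since it needs $|H_{\ST}|$, which is what you are trying to find). The missing observation is that no pinning down is needed: Theorem~\ref{splashextsubplane} applies to \emph{every} \orsl\ disjoint from $\li$ lying on a line through a non-centre point of $\ST$, so each such \orsl\ occurs in exactly one pair. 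Hence the $\ell$-side is simply the total number of such \orsls, namely $q^2$ (non-centre splash points) $\times\, q^3$ (lines through such a point other than $\li$) $\times\, q^3(q^3-1)$ (\orsls\ of a line of $\PG(2,q^3)$ avoiding a fixed point of it --- a small count you also never state). Equating $N q^2 = q^2\cdot q^3\cdot q^3(q^3-1)$ gives $N=q^6(q^3-1)$ at once.

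A secondary issue: the ``contradiction'' that derailed your group-theoretic route is an arithmetic slip, not a real inconsistency. You have $|H|/|I| = q^9(q^3-1)^2(q^3+1)/\bigl(q^2(q-1)\bigr) = q^7(q^3-1)(q^3+1)(q^2+q+1)$, since $(q^3-1)/(q-1)=q^2+q+1$; this agrees with the two-way count $M\cdot N$, so orbit--stabiliser and the incidence count carry the same information and you still need one of $M,N$ independently --- which is precisely what the count above supplies. Your deductions of part~\ref{C-2} (divide the total of Lemma~\ref{countinglemma} by $N$) and part~\ref{C1} (equidistribution of centres over the $q^3+1$ points of $\li$ by transitivity) are correct and identical to the paper's, but both are conditional on part~\ref{C-3}, so the proposal as it stands does not establish the theorem.
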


\begin{proof}
To prove part \ref{C-3}, 
let $\ST$ be a tangent splash on the line $\li$ in $\PG(2,q^3)$. 
We count the pairs $(\ell,\pi)$ where $\pi$ is an \orsp\ tangent
to $\li$ at $T$ with splash $\ST$ and $\ell$ is line of $\pi$ not
through $T$.  
First note that if $m$ is a line of $\PG(2,q^3)$ and $X$ a point of $m$,
then it is straightforward to show that the number of \orsls\ of $m$ that do
not contain $X$ is $q^3(q^3-1)$. 
There are $q^2$ points in $\ST$ distinct from $T$, and each point lies on
$q^3$ lines of $\PG(2,q^3)$ distinct from $\li$.
Hence there are $q^2\times q^3\times q^3(q^3-1)$ choices for an \orsl\ $\ell$. 
By Theorem~\ref{splashextsubplane}, each of these \orsls\ $\ell$ lies on a unique
\orsp\ that contains $\ell$ and has tangent splash $\ST$. 
  Hence if $n$ is the number of \orsps\ with the given tangent splash, we
  have $\r^2\times\r^3\times\r^3(\r^3-1)\times 1=n\times \r^2$ and so there
  are $\r^6(\r^3-1)$ tangent \orsps\ with a given tangent splash.

To count the number of tangent splashes on $\li$,  by
Theorem~\ref{transitiveontangentsubplanes} we can divide the total
number of \orsps\ tangent to $\li$
(calculated in Lemma~\ref{countinglemma}) by the
number of \orsps\ with a given tangent splash (calculated in part
\ref{C-3}). This proves part \ref{C-2}.

For part \ref{C1}, the subgroup of $\PGL(3,\r^3)$ fixing $\li$ is transitive on the points of $\li$, hence each point of $\li$ is the centre of a constant number of splashes.  The result now follows from part  \ref{C-2}.
\end{proof}

Theorem~\ref{splashextsubplane} shows that a tangent splash and one affine \orsl\
not through the centre uniquely determines an \orsp. We now consider the case
of a tangent splash and one affine \orsl\ through the centre, and show that this
determines more than one \orsp.

\begin{theorem}
In $\PG(2,q^3)$, let $\ST$ be a tangent splash of $\li$ with centre $T$. 
Let $m$ be an \orsl\ through $T$ (not in $\li$). Then there are
  $q(q^2-1)$ tangent \orsp s that contain $m$ and have tangent splash $\ST$.
%\item Let $m$ be an \orsl\ through $T$ (not in $\li$) and let $P$ be an
%  affine point of $\PG(2,q^3)$ not on $\overline m$, the line of $\PG(2,\r^3)$ containing $m$. If the projection of $m$ from $P$ onto $\li$ is not contained in $\ST$, then there are no \orsp s containing $P,m$ with splash $\ST$.  Otherwise, there are $\r(\r+1)$ tangent \orsp s containing $P,m$ with splash $\ST$.
\end{theorem}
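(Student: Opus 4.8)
The plan is to count, as in the proof of Theorem~\ref{notgtsplash}, the pairs $(P,\pi')$ where $\pi'$ is a tangent \orsp\ with tangent splash $\ST$ containing the fixed \orsl\ $m$, and $P$ is an affine point of $\pi'$ not lying on $m$. By Lemma~\ref{grouplemma}(\ref{G9a}) the group $I = G_{\pi,\li}$ is transitive on the non-centre points of $\ST$, and by Lemma~\ref{grouplemma}(\ref{G9}) it is transitive on the \orsls\ of $\pi$ through $T$; so by Corollary~\ref{transcor} and Lemma~\ref{grouplemma} I may coordinatise exactly as in the proof of Theorem~\ref{splashextsubplane}: take $\li' = [0,1,\tau]$, $\pi = \PG(2,q)$, centre $T=(1,0,0)$, splash $\ST = \{T\}\cup\{(s-r\tau,\tau,-1) : r,s\in\GF(q)\}$, and for $m$ the \orsl\ through $T$ I would take the line $[0,0,1]$ of $\pi$, i.e.\ $m = \{(1,0,0)\}\cup\{(a,1,0) : a\in\GF(q)\}$, which is an \orsl\ of $\PG(1,q^3)$ through $T$ not contained in $\li'$.

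The key step is the analogue of the computation in Theorem~\ref{splashextsubplane}: fix an affine point $X=(\beta,1,\gamma)$ with $\gamma\neq 0$ (so $X\notin m$, $X$ affine); since $\{T\}\cup m \cup \{X\}$ contains a quadrangle, it lies in a unique \orsp\ $\pi'$. I would write down, as before, the unique \orsl\ through $X$ and $T$ meeting the "fibre" through $X$, parametrise its $q+1$ points by a homography fixing $T$, join each such point to the points of $m$, and intersect with $\li'$; requiring every resulting infinite point to lie in $\ST$ forces a linear condition on the $\GF(q^3)$-coordinates of $\beta$ (and $\gamma$) that must hold for all values of the parameters. Solving it will show $X$ is constrained to lie in one of $q(q^2-1)$ candidate \orsp s through $m$ — equivalently, the coordinates of $X$ must satisfy a condition leaving exactly that many choices of the relevant $\GF(q^3)$-parameter modulo scalars. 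Then, having identified that each admissible affine point $X\notin m$ lies on a unique \orsp\ with splash $\ST$ through $m$ (uniqueness again via the quadrangle argument, since $m$ has $q+1$ points and $X$ supplies a quadrangle), I count pairs: an \orsp\ with splash $\ST$ through $m$ has $q^2$ affine points off $m$ (it has $q^2+q$ affine points, $q$ of which lie on $m$), while the number of admissible points $X$ is $q^2 \cdot q(q^2-1)$ — $q^2$ for the choices giving each \orsp\ times the $q(q^2-1)$ \orsp s — wait, rather I count the admissible $X$ directly from the coordinate condition and then divide by $q^2$. This yields the number of \orsp s as $q(q^2-1)$.

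Concretely, the counting identity will read: (number of admissible affine $X\notin m$) $= n \cdot q^2$, where $n$ is the desired count; the left side is computed to be $q^3(q^2-1)$ from the coordinate analysis (for each of the $q$ non-centre points of $\ST$ on the line $[0,0,1]$... — more precisely, one tallies the admissible $\beta\in\GF(q^3)$ for each relevant $\gamma\in\GF(q)$, getting $q \cdot q^2(q^2-1)/q = \ldots$), giving $n = q(q^2-1)$. I would present the coordinate computation cleanly, solving the $\tau^2$- and $\tau$-coefficient conditions as a small linear system in $\beta_0,\beta_1,\beta_2$ as was done in Theorem~\ref{splashextsubplane}, where now the system is underdetermined (one free parameter remains, contributing a factor accounting for why $m$ through the centre is "weaker" than an affine \orsl).

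The main obstacle I anticipate is pinning down exactly which affine points $X\notin m$ are admissible and verifying that each lies on a \emph{unique} \orsp\ with splash $\ST$ containing $m$: unlike Theorem~\ref{splashextsubplane}, where the affine \orsl\ $\ell$ together with $T$ already determined the subplane, here $m\cup\{T\}$ is only one \orsl\ plus a point, so I must be careful that $\{T\}\cup m\cup\{X\}$ genuinely forms a quadrangle (it does, as $X$ is off the line $m$) and that no admissible $X$ is double-counted. Getting the linear-algebra bookkeeping right — tracking the single remaining free parameter and confirming it contributes precisely the factor turning $q^6(q^3-1)/\text{(something)}$ into $q(q^2-1)$ — is where the care is needed; the geometry itself is routine once the coordinates are set up.
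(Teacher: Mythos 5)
There is a genuine gap, and it is exactly at the point you flag as the "main obstacle": the claimed uniqueness is false, and the quadrangle argument cannot deliver it. Since $T\in m$, the set $\{T\}\cup m\cup\{X\}=m\cup\{X\}$ consists of $q+1$ collinear points plus one further point, so any four of its points contain three collinear ones and it contains no quadrangle; nothing forces a unique \orsp\ through $m$ and $X$. Worse, uniqueness is numerically impossible. Take the coordinates of Lemma~\ref{defnBsplash}, so $\ST=\{(a+b\tau,1,0)\st a,b\in\GF(q)\}\cup\{T=(1,0,0)\}$, and take $m=\{T\}\cup\{(d,0,1)\st d\in\GF(q)\}$ on the line $L:y=0$. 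If an affine point $X=(\alpha,\beta,1)$, $\beta\neq0$, lies in a tangent \orsp\ containing $m$ with splash $\ST$, then for every $d\in\GF(q)$ the line of that subplane through $X$ and $(d,0,1)$ meets $\li$ in $((\alpha-d)/\beta,1,0)$, which must lie in $\ST$; taking differences over $d$ forces $1/\beta$ and $\alpha/\beta$ to lie in the $\GF(q)$-span $W$ of $\{1,\tau\}$. Hence there are at most $q^2(q^2-1)$ admissible points $X$ off $L$. But the statement to be proved, together with the fact that each such subplane has $q^2$ affine points off $L$, gives $q(q^2-1)\cdot q^2=q^3(q^2-1)$ incidences $(X,\pi)$; so each admissible point lies on $q$ such subplanes (on average), not one. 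Your identity $(\#\mbox{admissible }X)=n\cdot q^2$ must be replaced by $q\cdot(\#\mbox{admissible }X)=n\cdot q^2$, and establishing that multiplicity $q$ is precisely the hard content your outline does not supply; carried out as written, your computation of the admissible points would return $n=q^2-1$, not $q(q^2-1)$.

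The paper sidesteps this by counting with \emph{pairs} of affine points rather than single points, because three points $T,P,Q$ determine a unique \orsl\ whereas $m\cup\{X\}$ determines nothing. Concretely, it counts triples $(P,Q,\pi)$ where $P,Q$ are distinct affine points with $T\in PQ$ and $\pi$ is a tangent \orsp\ with splash $\ST$ containing $P,Q$: the perspectivities with axis $\li$ fix $\ST$ and are transitive on such ordered pairs, so the number $x$ of subplanes per pair is constant, and then Theorem~\ref{notgtsplash}(\ref{C-3}) gives $q^6(q^3-1)\cdot x=q^6(q^3-1)\cdot(q^2+q)(q-1)$, i.e.\ $x=q(q^2-1)$; since any subplane with splash $\ST$ through $P,Q$ contains the unique \orsl\ through $T,P,Q$, this $x$ is the required number. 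If you want to salvage your fixed-$m$, point-by-point approach, you would need an independent proof that every admissible $X$ lies on exactly $q$ subplanes through $m$ with splash $\ST$ (e.g.\ via a group acting on the $q(q^2-1)$ subplanes through $m$), which is essentially as hard as the theorem itself.
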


\begin{proof}
First note that the subgroup of $\PGL(3,q^3)$ containing translations with
axis $\li$ is 2-transitive on the affine points of an affine line. Hence
the number of \orsps\ containing two distinct affine points $P,Q$ such that
$T\in PQ$ is a
constant, denote it by $x$. As $T,P,Q$ lie in a unique \orsl, $x$ is the number
of \orsps\ containing an affine \orsl\ through $T$. We count in two ways the
triples $(P,Q,\pi)$ where $P,Q$ are distinct affine points, the line $PQ$
contains $T$ and $\pi$ is a tangent \orsp\ with splash $\ST$ containing
$P$ and $Q$. Using Lemma~\ref{notgtsplash}(\ref{C-3}) we have
$\r^6(\r^3-1)\times x=\r^6(\r^3-1)\times
(\r^2+\r)(\r-1)$ and so $x=\r(\r^2-1)$.
\end{proof}

We now look at how many points of $\li$ are needed to uniquely determine a
tangent splash.
 
%The next result shows that a tangent splash is uniquely determined by
%four points of $\li$, a point $T$ designated the centre, and three further points not on a common
%\orsl\ containing $T$. 

\begin{theorem}\Label{nos5-version2} Two tangent splashes with a common centre $T$ can
  meet in at most an \orsl\ which contains $T$.  Further, there are $q+1$
  tangent splashes of $\li$ with centre $T$ containing a fixed \orsl\ through $T$.
\end{theorem}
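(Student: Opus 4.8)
The plan is to transport both statements, via Corollary~\ref{transcor}, into elementary facts about the affine space $\AG(3,\r)$. First I would coordinatise exactly as in the proof of Theorem~\ref{splashextsubplane}: take the centre $T=(1,0,0)$, take $\li$ (written $\li'$ there) to have coordinates $[0,1,\tau]$, and parametrise its points other than $T$ by $\GF(\r^3)$ via $P_x=(x,\tau,-1)$. Identifying $\GF(\r^3)$ with $\AG(3,\r)$ using the $\GF(\r)$-basis $\{1,\tau,\tau^2\}$, the subgroup of $\PGL(3,\r^3)$ fixing $\li$ and $T$ induces on $\li\setminus\{T\}$ the full one-dimensional affine group $x\mapsto ax+b$ ($a\in\GF(\r^3)^\ast$, $b\in\GF(\r^3)$); consequently the \orsls\ of $\li$ through $T$ are exactly the sets $\{T\}\cup L$ with $L$ a line of $\AG(3,\r)$.

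The key step is to identify the tangent splashes with centre $T$ with the planes of $\AG(3,\r)$. The computation in the proof of Theorem~\ref{splashextsubplane} gives the tangent splash of $\pi=\PG(2,\r)$ (for this $\li$) as $\{T\}\cup\{(s-r\tau,\tau,-1):r,s\in\GF(\r)\}$, i.e.\ $\{T\}\cup\langle 1,\tau\rangle_{\GF(\r)}$ — namely $\{T\}$ together with a plane of $\AG(3,\r)$. Any collineation carrying one tangent splash with centre $T$ to another fixes $T$, so by Corollary~\ref{transcor} the stabiliser of $\li$ and $T$ is transitive on the tangent splashes with centre $T$; since that stabiliser induces the whole affine group on $\li\setminus\{T\}$, and since $\GF(\r^3)^\ast$ is transitive on the two-dimensional $\GF(\r)$-subspaces of $\GF(\r^3)$, it follows that $\ST\mapsto\ST\setminus\{T\}$ is an inclusion-preserving bijection from the tangent splashes with centre $T$ onto the planes of $\AG(3,\r)$, carrying \orsls\ through $T$ to lines of $\AG(3,\r)$. (As a check, Theorem~\ref{notgtsplash}(\ref{C1}) counts $\r(\r^2+\r+1)$ tangent splashes with centre $T$, exactly the number of planes of $\AG(3,\r)$.)

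With this dictionary both assertions become routine. Two distinct planes of $\AG(3,\r)$ meet in $\emptyset$ or in a line, so two distinct tangent splashes with centre $T$, say $\{T\}\cup F_1$ and $\{T\}\cup F_2$, meet in $\{T\}\cup(F_1\cap F_2)\subseteq\{T\}\cup L$ for some line $L$ of $\AG(3,\r)$, i.e.\ in an \orsl\ through $T$; and a fixed line of $\AG(3,\r)$ lies in exactly $\r+1$ planes (one for each two-dimensional subspace through its direction), so a fixed \orsl\ through $T$ lies in exactly $\r+1$ tangent splashes with centre $T$. The step needing the most care is the correspondence of the previous paragraph: one must check that the point stabiliser really induces all of the affine group on $\li\setminus\{T\}$, that the splash of $\PG(2,\r)$ extracted from the proof of Theorem~\ref{splashextsubplane} is a full plane and not a coset of something smaller, and that the two parametrisations are compatible so inclusion is preserved on both sides. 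For the second assertion alone one could instead double count the flags (\orsl\ through $T$, tangent splash with centre $T$ containing it): there are $\r^2(\r^2+\r+1)$ \orsls\ through $T$ in $\li$, there are $\r(\r^2+\r+1)$ tangent splashes with centre $T$ by Theorem~\ref{notgtsplash}(\ref{C1}), and each such splash contains $\r^2+\r$ \orsls\ through $T$ by Lemma~\ref{numberofsublines}(\ref{sublineprojection}); together with the transitivity of $\PGL(3,\r^3)_{\li,T}$ on \orsls\ through $T$ this forces the count $\r+1$. The first assertion, though, seems to genuinely need the affine picture, since a bare double count controls intersection sizes only on average, not pair by pair.
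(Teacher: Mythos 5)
Your argument is correct, and it reaches the theorem by a genuinely different route from the paper. For the first assertion the paper argues synthetically from Lemma~\ref{numberofsublines}(\ref{tgtsplash2pts}): if two splashes with centre $T$ share $U,V,W$ not all on an \orsl\ through $T$, it takes the \orsl\ $\ell(T,U,V)=\{T,U_1,\ldots,U_q\}$ together with the $q$ \orsls\ $\ell(T,U_i,W)$ and claims these cover $q^2+1$ points, forcing the splashes to coincide; for the second assertion it performs exactly the flag count you offer as your alternative (using Theorem~\ref{notgtsplash}(\ref{C1}) and Lemma~\ref{numberofsublines}(\ref{sublineprojection})). You instead establish, via the coordinates from Theorem~\ref{splashextsubplane}, the induced action of the stabiliser of $\li$ and $T$, and the transitivity of $\GF(\r^3)^\ast$ on two-dimensional $\GF(\r)$-subspaces, that the splashes with centre $T$ are exactly the sets $\{T\}\cup\Pi$ with $\Pi$ a plane of $\AG(3,\r)$ and the \orsls\ through $T$ the sets $\{T\}\cup L$ with $L$ a line; both assertions then drop out at once. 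What each buys: the paper's treatment is shorter and coordinate-free at this stage, while your model gives pair-by-pair control of intersections and anticipates the Sherk-surface description of Section~\ref{sect:sherk}. In fact your route is the more robust one here: the paper's printed covering count overcounts, since $W$ lies on all $q$ of the \orsls\ $\ell(T,U_i,W)$, so their union with $\ell(T,U,V)$ has $q^2-q+2$ points rather than $q^2+1$ (the conclusion still holds, but only after iterating the closure argument or passing to precisely the affine picture you set up).

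One step of yours needs shoring up. The sentence ``any collineation carrying one tangent splash with centre $T$ to another fixes $T$'' tacitly assumes the centre is determined by the splash as a point set, which is only proved later (Theorem~\ref{uniquecentre}). The transitivity you need is better obtained from Lemma~\ref{grouplemma}(\ref{G7}): given two splashes with centre $T$, choose \orsps\ tangent to $\li$ at $T$ having these splashes (such subplanes exist by the definition of a tangent splash with centre $T$), map one subplane to the other by an element of the stabiliser of $\li$, and observe that this element fixes $T$ because it carries $\pi\cap\li$ to $\pi'\cap\li$. With that substitution, and a one-line check that the stabiliser of a $2$-dimensional $\GF(\r)$-subspace in $\GF(\r^3)^\ast$ is exactly $\GF(\r)^\ast$, your proof is complete.
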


\begin{proof}
 Suppose two splashes $\ST$ and $\ST'$ have a common centre $T$ and contain
 three further common points $U,V,W$ not all on \orsl\ through $T$.  
Denote by $\ell(X,Y,Z)$ the unique \orsl\ through any three distinct
points $X,Y,Z$ of $\li$.  
By Lemma~\ref{numberofsublines}(\ref{tgtsplash2pts}),
 $\ST$ and $\ST'$ both contain the \orsl\ $\ell(T,U,V)=\{T,U_1=U,U_2,\ldots,U_\r\}$. Further,
 the $\r$ \orsls\ $\ell(T,U_i,W)$, $i=1,\ldots,q$, are 
contained in both $\ST$ and $\SdT$.  These $q$ \orsls\ together with
$\ell(T,U,V)$ cover $(\r+1)+\r\times (\r-1)=\r^2+1$ points.  Hence $\ST=\ST'$.
Thus $\ST$, $\ST'$ can both contain the \orsl\ $\ell(T,U,V)$, but cannot
have any further points in common.

Let $x$ be the number of splashes with centre $T$ containing
a fixed \orsl\ $\ell$ of $\li$, where $\ell$ contains $T$. We count in two ways the pairs $(\ell,\ST)$ where
$\ell$ is an \orsl\ of $\li$ through $T$ and $\ST$ is a tangent splash with
centre $T$ containing $\ell$.
By Lemma~\ref{numberofsublines}(\ref{sublineprojection}), the number of
\orsls\ contained in $\ST$ and containing $T$ is $q^2+q$. So using 
 Theorem~\ref{notgtsplash}(\ref{C1}),
 we have 
$$ q^2(q^2+q+1)\times x= q(q^2+q+1)\times q(q+1),$$
so $x=q+1$.
That is, there are $q+1$ tangent
splashes with common centre $T$ and containing a common \orsl\ $\ell$ through
$T$.
\end{proof}

\begin{theorem}\Label{nos5-version2-part2}
 Let $T,U,V,W$ be four distinct points of $\li$ not on a
  common \orsl. Then
there is a unique tangent splash containing $T,U,V,W$ with centre $T$.
\end{theorem}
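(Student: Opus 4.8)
The plan is to use a counting argument, in the same spirit as the proofs of Theorems~\ref{notgtsplash} and \ref{nos5-version2}. The key quantities to compare are: the number of tangent splashes with a fixed centre $T$ (which is $q(q^2+q+1)$ by Theorem~\ref{notgtsplash}(\ref{C1})), the number of non-centre points in each such splash (namely $q^2$), and the number of quadruples of points we are trying to cover. First I would note that by Corollary~\ref{transcor} and the transitivity of the line-stabiliser on points of $\li$, it suffices to work up to the collineation group; and by Theorem~\ref{nos5-version2}, two tangent splashes with common centre $T$ meet in at most an \orsl\ through $T$, so no tangent splash with centre $T$ can contain two of the four points $U,V,W$ together with a third non-centre point lying on no common \orsl\ — more precisely, two distinct tangent splashes with centre $T$ cannot share four points $T,U,V,W$ that lie on no common \orsl. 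This gives uniqueness immediately; the real content is existence.

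For existence I would count in two ways the pairs $(\{U,V,W\},\ST)$ where $\ST$ is a tangent splash with centre $T$ and $U,V,W$ are three non-centre points of $\ST$ lying on no common \orsl\ (equivalently, no common \orsl\ through $T$, since any \orsl\ containing two non-centre points of $\ST$ and $T$ is forced by Lemma~\ref{numberofsublines}(\ref{tgtsplash2pts})). On one side: there are $q(q^2+q+1)$ tangent splashes with centre $T$, and within each, using Theorem~\ref{splash-is-affine-plane}, the non-centre points form an affine plane of order $q$, so the number of non-collinear triples among the $q^2$ points is $\binom{q^2}{3}$ minus the number of collinear triples, which is $(q^2+q)\binom{q}{3}$ (there are $q^2+q$ lines, each with $q$ points). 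On the other side: I must count, for a fixed non-collinear triple $U,V,W$ of points of $\li$, how many tangent splashes with centre $T$ contain all of $U,V,W$; call this number $y$. Equating the two counts gives $y$. If this forces $y=1$ when $T,U,V,W$ are in "general position" — i.e. $U,V,W$ are not collinear with each other and the configuration $T,U,V,W$ has no three on an \orsl\ — we are done. (I would double-check whether the hypothesis should be read as "$T,U,V,W$ with no three on a common \orsl," since a triple $U,V,W$ collinear on an \orsl\ not through $T$ behaves differently; the clean statement to prove and use is that a splash with centre $T$ cannot contain such a triple unless... — this edge case needs care.)

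An alternative, perhaps cleaner route: use Theorem~\ref{nos5-version2-part2}'s predecessors directly. By Theorem~\ref{notgtsplash}(\ref{C-3}) there are $q^6(q^3-1)$ tangent \orsps\ with a given tangent splash, and by earlier results one can count tangent \orsps\ through a given configuration of points and intersect. Concretely: a tangent \orsp\ $\pi$ with $\pi\cap\li=T$ and splash $\ST$ containing $U,V,W$ corresponds, via Lemma~\ref{numberofsublines}, to a choice of affine points mapping onto the relevant \orsls\ of $\ST$; counting \orsps\ realising a prescribed affine \orsl\ not through $T$ (Theorem~\ref{splashextsubplane}) pins things down. I would set up the double count of triples $(P,\ell,\pi)$ or similar and extract that exactly one splash works.

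\textbf{Main obstacle.} The hard part will be pinning down the exact general-position hypothesis and making the collinearity bookkeeping on $\li$ consistent: I must carefully distinguish "three of $T,U,V,W$ lie on a common \orsl" from "$U,V,W$ collinear on an \orsl\ of $\li$," because Lemma~\ref{numberofsublines}(\ref{tgtsplash2pts}) shows any two non-centre points of a splash are automatically \orsl-collinear with $T$, so the only genuinely excluded configurations are those with $U,V,W$ (or a sub-triple with $T$) on one \orsl. Getting the count of non-collinear triples in the affine plane of order $q$ right, and verifying that the resulting value of $y$ is exactly $1$ (rather than, say, the count breaking because some triples are over-counted), is where the argument will stand or fall.
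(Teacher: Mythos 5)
Your route is genuinely different from the paper's, and the count does close, but two repairs are needed before it is a proof. For comparison, the paper argues locally and needs no counting beyond Theorem~\ref{nos5-version2}: any splash with centre $T$ through $U,V$ contains the unique \orsl\ $\ell$ through $T,U,V$ (Lemma~\ref{numberofsublines}(\ref{tgtsplash2pts})); by Theorem~\ref{nos5-version2} there are exactly $q+1$ splashes with centre $T$ containing $\ell$, and any two of them meet exactly in $\ell$, so together they cover $(q^2-q)(q+1)+q+1=q^3+1$ points, that is, all of $\li$; hence $W$, which is off $\ell$, lies in exactly one of them, giving existence and uniqueness in one stroke. Your global double count also works, but only once it is finished correctly: you cannot simply ``equate the two counts to get $y$'', because without a transitivity result on such quadruples $y$ is not known to be constant; the double count only gives its average. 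The fix is exactly the ingredient you already have: your uniqueness step gives $y\le 1$ for every admissible triple, and the count gives average $1$, forcing $y=1$ everywhere. The arithmetic does check out: there are $q(q^2+q+1)$ splashes with centre $T$ (Theorem~\ref{notgtsplash}(\ref{C1})), each containing $\binom{q^2}{3}-(q^2+q)\binom{q}{3}=q^3(q^2-1)(q-1)/6$ triples non-collinear in the affine plane of Theorem~\ref{splash-is-affine-plane}, while the number of admissible triples $\{U,V,W\}$ in $\li\setminus\{T\}$ (those with $W$ not on the \orsl\ through $T,U,V$) is $q^3(q^3-1)(q^3-q)/6=q^4(q^3-1)(q^2-1)/6$; the two products are equal, so the average of $y$ is $1$.

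The bookkeeping you flag as the main obstacle does need fixing, and both of your tentative readings are the wrong ones. Since $\PGL(2,q^3)$ is sharply $3$-transitive on $\li$, any three distinct points of $\li$ lie on a unique common \orsl; hence ``$U,V,W$ on no common \orsl'' is an empty condition, your parenthetical ``equivalence'' is false as stated, and reading the hypothesis as ``no three of $T,U,V,W$ on a common \orsl'' is unsatisfiable. The correct non-degeneracy condition is precisely the one in the statement: the four points $T,U,V,W$ do not lie on a common \orsl, equivalently $W$ is not on the unique \orsl\ through $T,U,V$, equivalently (for non-centre points of a splash with centre $T$) the triple $U,V,W$ is non-collinear in the affine plane $\mathcal I$ of Theorem~\ref{splash-is-affine-plane}. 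A triple $U,V,W$ lying on a common \orsl\ not through $T$ is perfectly admissible and causes no trouble on either side of the count. With the condition stated this way, and with the average-plus-$y\le1$ conclusion spelled out, your argument goes through; it buys a global consistency check on the counts of Sections~\ref{sect:sublines} and~\ref{sect:count}, at the price of more machinery than the paper's covering argument.
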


\begin{proof}
Let $\ell$ be the unique \orsl\ containing $T,U,V$. 
By Lemma~\ref{numberofsublines}(\ref{tgtsplash2pts}), a tangent splash with
centre $T$ containing $U,V$ must contain $\ell$. 
By
Theorem~\ref{nos5-version2}, there are $q+1$ tangent splashes with centre
$T$ containing $\ell$. These tangent splashes cover $(q^2-q)\times(q+1)+q+1=q^3+1$
points of $\li$. Hence $W$ lies in exactly one of these tangent splashes.
Hence $T$ and three further points of $\li$ not all on a common \orsl\ lie
in a unique tangent splash with centre $T$.
\end{proof}

\section[Coordinatisation]{Coordinatisation of a tangent subplane}\Label{coord-subplane}

%\subsection{A subplane $\Bpi$ in $\PG(2,q^3)$ tangent to $\li$}

We need to calculate the coordinates of points in an \orsp\ of
$\PG(2,q^3)$ that is tangent to $\li$. 
The \orsp\ $\PG(2,q)$ of $\PG(2,q^3)$ meets $\li$ in $q+1$ points. By
\cite[Lemma 2.6]{barw12}, we can
map $\PG(2,q)$ to an \orsp\ $\Bpi$ tangent to $\li$ using the homography $\zeta$ with matrix
$A_1$, where
\[
A_1=\begin{pmatrix}
-\w&1+\w&0\\
0&1&0\\
0&1+\w&-\w
\end{pmatrix},
\qquad
A_1'=
\begin{pmatrix}
-1&1+\w&0\\
0&\w&0\\
0&1+\w&-1
\end{pmatrix}.
\]
The matrix $A_1'$ is the matrix of the inverse homography $\zeta^{-1}$
that maps $\Bpi$ to $\PG(2,q)$. 

\begin{figure}[h]
\centering
\input{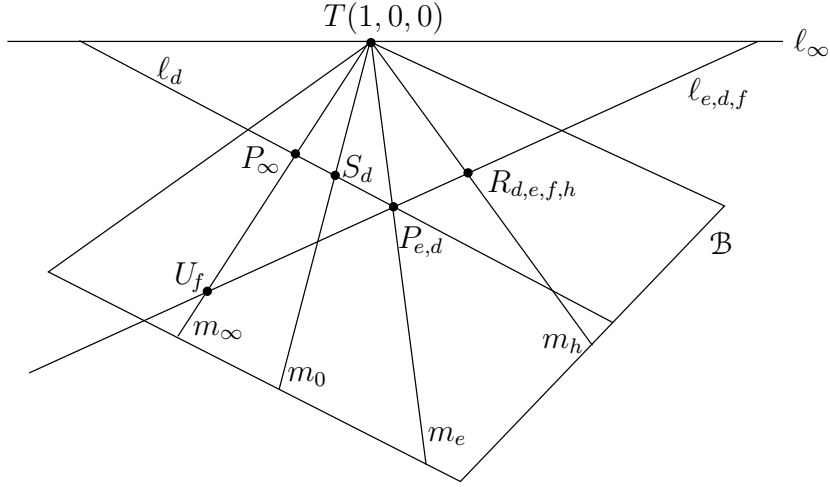}
\caption{Tangent subplane notation}\label{fig-subscripts}
\end{figure}

Note that $\zeta$ fixes the points $T=(1,0,0)$ and 
$P_\infty=(1,1,1)$ of $\PG(2,\r)$.
 The lines of $\PG(2,q)$  through $T$ can be written with homogeneous coordinates $m'_e=[0,1-e,e]$,
 $e\in\GF(\r)$ and $m'_\infty=TP_\infty=[0,1,-1]$. For $e\in\GF(q)$, the line $m'_e$ is mapped by $\zeta$ to
 the line $m_e=[0,e+\tau,-e]$ of $\Bpi$ (a line through
 $T$).  The line $m'_\infty$ of $\PG(2,\r)$ is fixed by $\zeta$ and so $m_\infty=[0,1,-1]$ is the final line of $\Bpi$
 through $T$.

\begin{table}[ht]
\caption{Coordinates in the tangent \orsp\ $\Bpi$, ($e,d,f,h\in\GF(q)$)}\label{table-coords}
\begin{center}
\begin{tabular}{|c|c|c|}
\hline
Notation&Coordinates&Description\\
\hline
$T$&$(1,0,0)$&$\Bpi\cap\li$\\
$P_\infty$&$(1,1,1)$&\\
$m_e$&$[0,e+\tau,-e]$&lines of $\Bpi$ through $T$\\
$m_\infty$&$[0,1,-1]$&$TP_\infty$\\
$S_d$&$(d,0,1)$&points of $\Bpi$ on $m_0$\\
$U_f$&$(1+f\tau,1,1)$&points of $\Bpi$ on $m_\infty$\\
$\ell_d$&$[1,d-1,-d]$&$P_\infty S_d$\\
$P_{e,d}$&$(e+d\tau,e,e+\tau)$&$m_e\cap \ell_d$\\
$\ell_{e,d,f}$&$[-1,ef-d+1+f\tau,d-ef]$&$P_{e,d}U_f$\\
$R_{e,d,f,h}$&$(h+(fh-fe+d)\tau,h,h+\tau)$& $\ell_{e,d,f}\cap m_h$\\
$R_{e,d,f,\infty}$&$(1+f\tau,1,1)$&$\ell_{e,d,f}\cap m_\infty=U_f$\\
\hline
\end{tabular}
\end{center}
\end{table}

Next we find the coordinates of the  points in $\Bpi$,
see
Figure~\ref{fig-subscripts} for an illustration of the notation used. First
consider the line $m_\infty'$ of $\PG(2,q)$. 
For $f\in\GF(q)$,  the point $U'_f=(1-f,1,1)\in m_\infty'$ is mapped
under $\zeta$ to the point $U_f=(1+f\tau,1,1)$ on $m_\infty$ in $\Bpi$. 
Now consider the line $m'_0$ of $\PG(2,q)$, it has points $T$ and
$S'_d=(d,0,1)$, $d\in\GF(q)$.  The point $S_d'$ maps under $\zeta$ to
$S_d=(d,0,1)$.  So the affine points of $m_0$ in $\Bpi$ are $S_d$, $d\in\GF(\r)$.

We work out the coordinates of the other points of $\Bpi$ as follows. 
 First define the $q$ lines $\ell_d=P_\infty S_d$, $d\in\GF(\r)$.
Now for $d,e,f,h\in\GF(q)$, let $P_{e,d}=\ell_d\cap m_e$, $\ell_{e,d,f}=P_{e,d}U_f$,
and $R_{d,e,f,h}=\ell_{e,d,f}\cap m_h$, $R_{e,d,f,\infty}=\ell_{e,d,f}\cap m_\infty=U_f$.
 Working out the coordinates for
these points and lines is straightforward, and they are given in Table~\ref{table-coords}.

The  notation $\ell_{e,d,f}$ is needed in subsequent calculations. However, if we wish to enumerate the lines of $\Bpi$, we can use the following lemma.

\begin{lemma}\Label{allextlines}
Every line in $\Bpi$ not through $T$ is represented by some $\ell_{0,d,f}$ with $d,f\in\GF(\r)$.
\end{lemma}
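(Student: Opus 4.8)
The goal is to show that every line of $\Bpi$ not through $T$ can be written in the form $\ell_{0,d,f}$ for suitable $d,f\in\GF(q)$; in other words, the parameter $e$ can always be normalised to $0$. I would argue as follows. By Table~\ref{table-coords}, the lines of $\Bpi$ not through $T$ are exactly the $q^3$ lines $\ell_{e,d,f}=[-1,\,ef-d+1+f\tau,\,d-ef]$ with $e,d,f\in\GF(q)$ (there are $q^3$ such triples, and $\Bpi$ has exactly $q^2+q+1-(q+1)=q^2$ lines not through $T$, so this parametrisation is far from injective). The natural strategy is therefore to write down, for fixed $d,f$, the line $\ell_{0,d,f}=[-1,\,-d+1+f\tau,\,d]$ and then exhibit, for an arbitrary triple $(e,d',f')$, a choice of $d,f$ making $\ell_{e,d',f'}=\ell_{0,d,f}$ as points of $\PG(2,q^3)$ (equality up to a scalar in $\GF(q^3)^*$).

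First I would observe that the first coordinate of every $\ell_{e,d,f}$ is $-1$, so two such lines are equal if and only if their last two coordinates agree exactly (no scalar is available, since rescaling the first coordinate away from $-1$ would force a $\GF(q)$-scalar, and then the $\tau$-terms cannot be matched unless the scalar is $1$). Hence $\ell_{e,d',f'}=\ell_{0,d,f}$ forces, by comparing the coefficient of $\tau$ in the middle coordinate, $f=f'$, and then comparing the $\GF(q)$-parts gives $ef'-d'+1 = -d+1$ and $d'-ef' = d$, i.e. $d = d'-ef'$. So the claim is simply that $\ell_{e,d',f'} = \ell_{0,\,d'-ef',\,f'}$, which is immediate once one substitutes $d = d'-ef'$, $f=f'$ into the formula for $\ell_{0,d,f}$ and checks it reproduces $[-1,\,ef'-d'+1+f'\tau,\,d'-ef']$ — which it does, coordinate by coordinate. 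This is a one-line verification.

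Alternatively, and perhaps more conceptually, I would note that $\ell_{e,d,f}$ was defined as $P_{e,d}U_f$, and $U_f$ is independent of $e$ and $d$; so the line $\ell_{e,d,f}$ depends only on $U_f$ and on which point of $m_e$ it passes through. The point $\ell_{e,d,f}\cap m_0$ is some affine point of $m_0$, hence equals $S_{d''}$ for a unique $d''\in\GF(q)$, and then $\ell_{e,d,f}=S_{d''}U_f=\ell_{0,d'',f}$ by definition. Extracting $d''$ from the coordinates of $\ell_{e,d,f}$ (or from $R_{e,d,f,0}$, the intersection with $m_0$, read off Table~\ref{table-coords} as the point with $h=0$, namely $(d-fe)\tau$-type coordinates $(\,(d-fe)\tau,0,\tau\,)=(d-fe,0,1)=S_{d-fe}$) gives $d''=d-fe$, recovering the same relation $\ell_{e,d,f}=\ell_{0,d-fe,f}$.

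**Main obstacle.**
There is no real obstacle here — the lemma is essentially a bookkeeping fact about the parametrisation in Table~\ref{table-coords}. The only point requiring a little care is the justification that equality of the two line-vectors as points of $\PG(2,q^3)$ really does force equality of the raw coordinate vectors (no nontrivial scalar), which rests on the fact that the first coordinate is the constant $-1$ and the middle coordinate has a genuine $\tau$-component; I would state this explicitly so the reader sees why comparing coordinates directly is legitimate. Everything else is the substitution $d\mapsto d-fe$.
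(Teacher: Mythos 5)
Your computational core is correct: $\ell_{e,d,f}$ and $\ell_{0,\,d-ef,\,f}$ have literally identical coordinate vectors $[-1,\,ef-d+1+f\tau,\,d-ef]$, so no scalar discussion is even needed, and your reading of $R_{e,d,f,0}=S_{d-fe}$ from Table~\ref{table-coords} is right. The gap is in your starting premise: you assert, citing the table, that the lines of $\Bpi$ not through $T$ are exactly the lines $\ell_{e,d,f}$, $e,d,f\in\GF(\r)$. The table does not say this; it only defines $\ell_{e,d,f}=P_{e,d}U_f$ and records its coordinates. Whether this parametrisation reaches \emph{every} line of $\Bpi$ not through $T$ is precisely the substance of the lemma (the normalisation of $e$ to $0$ is the easy part), so as written your first argument assumes essentially what is to be proved. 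The paper avoids this issue by parametrising the lines differently: since $\zeta$ maps $\PG(2,\r)$ onto $\Bpi$ and fixes $T$, every line of $\Bpi$ not through $T$ is the image under $\zeta$ of a line $[1,a,b]$ of $\PG(2,\r)$ not through $T$; computing that image gives $[-1,\,1+b+(1+a+b)\tau,\,-b]$, which is $\ell_{0,-b,1+a+b}$, so surjectivity is built in.

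The good news is that your ``more conceptual'' paragraph already contains the fix, provided you apply it to an arbitrary line rather than to a line of the form $\ell_{e,d,f}$: if $\ell$ is any line of $\Bpi$ not through $T$, then $\ell\cap m_\infty$ is a point of $\Bpi$ on $m_\infty$ other than $T$, hence equals $U_f$ for some $f\in\GF(\r)$, and $\ell\cap m_0$ is a point of $\Bpi$ on $m_0$ other than $T$, hence equals $S_d$ for some $d\in\GF(\r)$ (the paper notes that the affine points of $m_0$ in $\Bpi$ are exactly the $S_d$); these two points are distinct since $m_0\cap m_\infty=T\notin\ell$, so $\ell=S_dU_f=P_{0,d}U_f=\ell_{0,d,f}$. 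That single observation proves the lemma outright, turns your algebraic identity $\ell_{e,d,f}=\ell_{0,d-ef,f}$ into a by-product rather than the main step, and is a genuinely different (and arguably cleaner) route than the paper's computation of line images under $\zeta$.
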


\begin{proof}
Consider the image of the general affine line $t'=[1,a,b]$ of $\PG(2,\r)$
not through $T=(1,0,0)$.  Under $\zeta$ it is mapped to
$t=[-1,1+b+(1+a+b)\tau,-b]$.  Calculating $\ell_{e,d,f}$ with
$e=0,d=-b,f=1+a+b$ gives $[-1,0\times f+b+1+(1+a+b)\tau,-b]$    which is the
line $t$ as required.
\end{proof}

We can use this to calculate the coordinates of the tangent splash of $\Bpi$.

\begin{lemma}\Label{defnBsplash}
The tangent splash $\ST$ for $\Bpi$ has centre $T=(1,0,0)$ and points $V_{d,f}=
\ell_{0,d,f}\cap\li=(1-d+f\tau,1,0)$ for $d,f\in\GF(q)$.
 Alternatively, we can write $\ST=\{(a+b\tau,1,0)\st a,b\in\GF(q)\}\cup\{(1,0,0)\}$.
\end{lemma}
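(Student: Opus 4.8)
The statement is a routine coordinate computation, so the plan is to compute the intersection $\ell_{0,d,f}\cap\li$ directly from the data already assembled in Table~\ref{table-coords}, and then recognise the resulting set of $q^2$ points (together with the centre $T$) as the claimed set. Recall that throughout this section $\li$ has equation $[0,0,1]$, i.e.\ $\li=\{(x,y,0)\st x,y\in\GF(q^3)\}$, and that by Lemma~\ref{allextlines} every line of $\Bpi$ not through $T$ is one of the lines $\ell_{0,d,f}$ with $d,f\in\GF(q)$. So the tangent splash, which by definition is the set of points of $\li$ lying on lines of $\Bpi$, is exactly $\{T\}\cup\{\ell_{0,d,f}\cap\li\st d,f\in\GF(q)\}$ — here $T=\Bpi\cap\li$ contributes the centre, since every line of $\Bpi$ through $T$ meets $\li$ only at $T$.

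First I would read off from Table~\ref{table-coords} the line $\ell_{e,d,f}=[-1,\,ef-d+1+f\tau,\,d-ef]$ and specialise to $e=0$, giving $\ell_{0,d,f}=[-1,\,1-d+f\tau,\,d]$. Intersecting with $\li\colon z=0$ means solving $-x+(1-d+f\tau)y=0$ with $z=0$; taking $y=1$ gives the point $V_{d,f}=(1-d+f\tau,\,1,\,0)$, which is the asserted coordinate. Next I would check that the $q^2$ points $V_{d,f}$ are pairwise distinct as $(d,f)$ ranges over $\GF(q)^2$: since $1,\tau,\tau^2$ are a $\GF(q)$-basis of $\GF(q^3)$, the first coordinate $1-d+f\tau$ determines $d$ and $f$ uniquely, so there are exactly $q^2$ such points and none equals $T=(1,0,0)$. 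This already matches the count $q^2+1$ for a tangent splash, confirming nothing has been missed.

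For the alternative description, observe that as $(d,f)$ ranges over $\GF(q)^2$ the quantity $1-d$ ranges over all of $\GF(q)$ and $f$ ranges over all of $\GF(q)$ independently, so $\{1-d+f\tau\st d,f\in\GF(q)\}=\{a+b\tau\st a,b\in\GF(q)\}$. Hence $\ST=\{(a+b\tau,1,0)\st a,b\in\GF(q)\}\cup\{(1,0,0)\}$, as claimed. I would also note, for completeness, that the lines $m_e$, $m_\infty$ of $\Bpi$ through $T$ meet $\li$ only in $T$ (their third coordinate entry makes the intersection with $z=0$ force $y=0$, hence the point $(1,0,0)$), so these contribute nothing new to the splash.

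There is essentially no obstacle here: the only mild point to be careful about is the change of coordinates convention — in this section and in the coordinatisation of $\Bpi$ the line $\li$ is $[0,0,1]$ (unlike the proof of Theorem~\ref{splashextsubplane}, where a different convention was temporarily used), so one must use $z=0$ rather than the $[0,1,\tau]$ line. Once that is pinned down, the computation is a one-line substitution into Table~\ref{table-coords} followed by the basis argument for distinctness and surjectivity onto $\{a+b\tau\}$.
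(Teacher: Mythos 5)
Your proof is correct and follows essentially the same route as the paper's: both invoke Lemma~\ref{allextlines} to reduce to the lines $\ell_{0,d,f}$, read off $\ell_{0,d,f}\cap\li=(1-d+f\tau,1,0)$ from Table~\ref{table-coords}, and reparametrise $1-d+f\tau$ as $a+b\tau$. The extra checks you include (distinctness of the $V_{d,f}$, the lines through $T$ contributing only $T$) are sound but are left implicit in the paper.
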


\begin{proof}
 By Lemma~\ref{allextlines},
 the lines of $\Bpi$ are $\ell_{0,d,f}$. 
Let $V_{d,f}= \ell_{0,d,f}\cap\li=(1-d+f\tau,1,0)$, 
so the splash of $\Bpi$ is
$\ST=\{T\}\cup\{V_{d,f}\st f,d\in\GF(q)\}$. It is straightforward to rewrite
this as $\{(a+b\tau,1,0)\st a,b\in\GF(q)\}\cup\{(1,0,0)\}$.
\end{proof}

\section{Linear Sets}\Label{sec:new-lin-set}

Linear sets have many applications to projective geometry, and recent work by Lavrauw and Van de Voorde \cite{lavr10}  studies them in great detail. 
We will show that a tangent splash of the line $\li\cong\PG(1,q^3)$ is a $\GF(q)$-linear set of rank 3.
We begin by defining linear sets of $\PG(1,q^3)$ (more generally, linear sets of $\PG(n-1,q^t)$ are defined in \cite{lavr10}). 
The points of $\PG(1,q^3)$ can be considered as elements of   the 2-dimensional vector space $V=\GF(q^3)^2$ over 
$\GF(q^3)$.
 Let $U$ be a subset of $V$ that forms a 3-dimensional vector space over $\GF(q)$. Then the vectors of $U$ (considered as vectors over $\GF(q^3)$) form a $\GF(q)$-linear set of rank 3 of $\PG(1,q^3)$.

\begin{theorem}\Label{splash-is-linear}
Let $\ST$ be a tangent splash of $\li\cong\PG(1,q^3)$ in $\PG(2,q^3)$. Then $\ST$ is a $\GF(q)$-linear set of rank 3 and size $q^2+1$. Conversely, a $\GF(q)$-linear set of rank 3 and size $q^2+1$ is a tangent splash.
\end{theorem}

\begin{proof}
By Theorem~\ref{transitiveontangentsubplanes}, we can without loss of generality prove this for the tangent splash of the \orsp\ $\Bpi$ coordinatised in Section~\ref{coord-subplane}. By Lemma~\ref{defnBsplash}, $\Bpi$ has
 tangent splash $\ST=\{(a+b\tau,1,0)\st a,b\in\GF(q)\}\cup\{(1,0,0)\}$ (with centre $T=(1,0,0)$). That is, $\ST$ is the span over $\GF(q)$ of the three linearly independent vectors 
$(1,0,0),(0,1,0)$ and $(\tau,1,0)$. Hence a tangent splash of $\PG(1,q^3)$ is a $\GF(q)$-linear set of rank 3, and size $q^2+1$. 
Finally, \cite[Theorem 5]{lavr10} shows that all $\GF(q)$-linear sets of rank 3, and size $q^2+1$ are projectively equivalent.
\end{proof}

We can interpret $\GF(q)$-linear sets of $\PG(1,q^3)$ of rank 3
in the Bruck-Bose representation in $\PG(5,q)$ as described in \cite{lavr10}. Recall that in this representation, a point $A$ of $\PG(1,q^3)$ corresponds to a plane $[A]$ of a regular 2-spread $\S$ in $\PG(5,q)$. 
Let $\Xl$ be  a  tangent splash of $\li\cong\PG(1,q^3)$, so $\Xl$ is a $\GF(q)$-linear set of rank 3 of size $q^2+1$. Let $[\Xl]$ be the corresponding set of planes of the regular 2-spread $\S$ in $\PG(5,q)$. Then as $\Xl$ is a linear set, there is a subspace $\Pi$ of $\PG(5,q)$, such that the planes of $[\Xl]$ are exactly the planes of $\S$ that meet $\Pi$. 
Clearly, $\Pi$ cannot be a 4-space. Further, by \cite[Lemma 10]{lavr10}, each 3-space of $\PG(5,q)$ meets every plane of the regular 2-spread $\S$, so $\Pi$ cannot be a 3-space. Hence $\Pi$ is a plane, and so $\Pi$ meets one plane $[A]$ of $[\Xl]$ in a line, and the remaining $q^2$ elements of $[\Xl]$ in a point. So, using the terminology introduced in \cite{fanc}, and used in \cite{lavr10}, $\Xl$ is a {\em club} with {\em head} the point $A$ of $\Xl$. 
Moreover, it follows from the theory on linear sets that $A$ is the centre of the tangent splash $\Xl$, and there are $q^2+q+1$ planes that have this property.

\begin{corollary}\Label{coverplanes}
Let $\ST$ be a tangent splash of $\li$ with centre $T$, and let $[\ST]$ be
the corresponding set of planes in $\si$ in the Bruck-Bose representation in
 $\PG(6,q)$. 
There
are exactly $q^2+q+1$ planes of $\si\cong\PG(5,q)$ that meet every plane of
$[\ST]$, called {\bf cover planes}. The cover planes
each meet the centre $[T]$ in distinct lines, and meet every other plane of $[\ST]$ in 
distinct points.%, and hence are contained entirely within the splash. 
\end{corollary}

We can interpret results known about linear sets to get some further interesting results about tangent splashes. In Theorem~\ref{nos5-version2}, we show that two tangent splashes with a common centre meet in at most $q+1$ points. 
In \cite{ferr03}, it is  shown that two tangent splashes meet in at most $2q+2$ points. We have run some computer simulations which suggest that this bound can be improved to $2q$. We conject that if two tangent splashes 
${\mathscr S}_{T_1}$, ${\mathscr S}_{T_2}$, meet in a set $X$ of $2q$ points, then $X$ consists of two \orsls\ that meet in the points $T_1$ and $T_2$.

The following result from \cite{lavr10} is particularly interesting to us, note that we state it only for the particular case when a linear set is a tangent splash. It allows us to prove a useful corollary about \orsls\ in a tangent splash.

\begin{theorem}
{\rm \cite[Corollary 13]{lavr10}}
\Label{cor13}
Through two non-centre points of a tangent splash $\ST$, there is exactly one \orsl\ contained in $\ST$, further, it contains the centre.
\end{theorem}

\begin{corollary}\Label{nos6}
Let $\ST$ be a tangent splash of $\PG(2,q^3)$. Then every \orsl\ contained in $\ST$ contains $T$.  
\end{corollary}

We note that this also allows us to demonstrate a bijection. 
If $\pi$ is a tangent \orsp\ with tangent splash $\ST$, then the \orsls\ of $\pi$ through a point $P\in\pi\setminus\{T\}$ meet $\ST$ in an \orsl\ through $T$. Hence each \orsl\ of $\ST$ corresponds to a unique point of $\pi\setminus\{T\}$, that is:

\begin{corollary}\Label{cor:orsl-corr-to}
Let $\pi$ be a tangent \orsp\ with tangent splash $\ST$ on $\li$. The $q^2+q$ \orsls\ of $\ST$ are in one to one correspondence with the points of $\pi\setminus\{T\}$.
\end{corollary}

We complete this section with a short discussion about the different constructions of tangent splashes and linear sets from \orsps\ of $\PG(2,q^3)$. 
In \cite{luna04}, it is shown that every linear set is a projection of a canonical geometry. That is, a tangent splash $\ST$ on $\li$ can be obtained by taking an \orsp\ $\pi$ in $\PG(2,q^3)$, and a point $P\notin\pi$, and projecting $\pi$ from $P$ to $\li$. We note that this is a different geometric construction to ours, which constructs the tangent splash of $\pi$ as the intersection of the lines of $\pi$ with $\li$. Given an \orsp\ $\pi$ with tangent splash $\ST$ on $\li$, we conject that there is no point $P$ that projects $\pi$ onto $\ST$; in order to get a projection that is $\ST$, we conject that we need to project an \orsp\ $\pi'$ exterior to $\li$ from a point $P\notin\pi'$ that lies on a line of $\pi'$.

\section{Tangent splashes and Sherk surfaces}\Label{sect:sherk}

In this section we show that a tangent splash is a Sherk surface.
We can uniquely identify points $(a_0,a_1,a_2,1)$ of $\AG(3,q)$,
$a_0,a_1,a_2\in\GF(q)$,  with
elements $a_0+a_1\tau+a_2\tau^2$ of $\GF(q^3)$. 
In \cite{sher86}, Sherk considers this representation of $\A=\AG(3,q)$ and lets
$\overline\A=\AG(3,q)\cup\{\infty\}$. 
A {\em Sherk surface} ${\mathscr S}(\theta,K,L,\phi)$, $\theta,\phi\in\GF(q)$,
  $K,L\in\GF(q^3)$, is the set of points $X$ in $\overline \A$ satisfying
$$\theta X^{1+q+q^2}+(KX^{1+q}+K^qX^{q+q^2}+K^{q^2}X^{1+q^2})+(LX+L^qX^q+L^{q^2}X^{q^2})+\phi=0.$$
There are four orbits of Sherk surfaces, uniquely determined by the
  size of the surfaces in each orbit. A Sherk surface has size 1, $q^2-q+1,\
  q^2+1,$ or $q^2+q+1$.
The Sherk surfaces of size $q^2+q+1$ are precisely the Bruck
  covers/hyper-reguli, see \cite{john07}, 
  and are equivalent to the exterior splash of an exterior \orsp\ of $\PG(2,q^3)$, see \cite{BM-ext}. 
  We show here that a tangent splash of $\PG(2,q^3)$ is a Sherk
surface of size $q^2+1$.

\begin{theorem} A tangent splash of $\PG(2,q^3)$ is a Sherk surface of size $q^2+1$.
\end{theorem}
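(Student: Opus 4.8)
The plan is to use the explicit coordinatisation from Section~\ref{coord-subplane}. By Corollary~\ref{transcor} the collineation group $\PGL(3,q^3)$ fixing $\li$ is transitive on tangent splashes, so it suffices to exhibit one tangent splash as a Sherk surface; I take the splash $\ST$ of the coordinatised \orsp\ $\Bpi$, which by Lemma~\ref{defnBsplash} is $\ST=\{(a+b\tau,1,0)\st a,b\in\GF(q)\}\cup\{(1,0,0)\}$. The first step is to identify $\li$ with $\overline\A=\AG(3,q)\cup\{\infty\}$ in the natural way: the affine point $(x,1,0)$ of $\li$ corresponds to the element $X=x\in\GF(q^3)$ of $\A$, and the point $(1,0,0)$ corresponds to $\infty$. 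Under this identification, $\ST\setminus\{T\}$ is exactly the set $\{X\in\GF(q^3)\st X=a+b\tau,\ a,b\in\GF(q)\}$, i.e.\ the $\GF(q)$-subspace spanned by $1$ and $\tau$, together with $\infty$.

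The heart of the proof is then to find $\theta,\phi\in\GF(q)$ and $K,L\in\GF(q^3)$ so that the Sherk equation
$$\theta X^{1+q+q^2}+(KX^{1+q}+K^qX^{q+q^2}+K^{q^2}X^{1+q^2})+(LX+L^qX^q+L^{q^2}X^{q^2})+\phi=0$$
holds precisely when $X\in\langle 1,\tau\rangle_{\GF(q)}$ or $X=\infty$. Since $\infty$ should lie on the surface, I need $\theta=0$, so the equation becomes $NX^{1+q}+N^qX^{q+q^2}+N^{q^2}X^{1+q^2}+LX+L^qX^q+L^{q^2}X^{q^2}+\phi=0$ for suitable $N=K$. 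The condition $X\in\langle 1,\tau\rangle$ is a single linear condition over $\GF(q)$ on the coordinates $(x_0,x_1,x_2)$ of $X=x_0+x_1\tau+x_2\tau^2$, namely $x_2=0$. Equivalently, writing the trace-like dual functional, there is a nonzero $c\in\GF(q^3)$ with $\mathrm{Tr}_{q^3/q}(cX)=0$ iff $x_2=0$; the map $X\mapsto\mathrm{Tr}(cX)$ is $\GF(q)$-linear and can be written as $LX+L^qX^q+L^{q^2}X^{q^2}$ with $L=c$. So I would try $\phi=0$, $K=0$, and $L=c$ where $c$ is chosen so that $\mathrm{Tr}(cX)=x_2$ (the coefficient of $\tau^2$); explicitly $c$ is determined by $\mathrm{Tr}(c)=\mathrm{Tr}(c\tau)=0$, $\mathrm{Tr}(c\tau^2)=1$, which exists and is unique since the trace form is nondegenerate. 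Then the Sherk surface ${\mathscr S}(0,0,c,0)$ is exactly $\{X\st x_2=0\}\cup\{\infty\}=\ST$.

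Finally I would count: the set $\{X\st x_2=0\}$ has $q^2$ elements, so together with $\infty$ the surface has size $q^2+1$, matching the stated orbit size and confirming $\ST$ is a Sherk surface of size $q^2+1$. The main obstacle, such as it is, is verifying that $\infty$ genuinely lies on a Sherk surface with $\theta=0$ (one must check Sherk's convention for evaluating the homogeneous form at $\infty$, which amounts to the coefficient of the top-degree term $X^{1+q+q^2}$ vanishing) and double-checking that the functional $X\mapsto x_2$ really is of the pure linear shape $\mathrm{Tr}(cX)$ with no quadratic ($K$) part needed — this is where I would be most careful, but it follows immediately from nondegeneracy of the trace bilinear form $\GF(q^3)\times\GF(q^3)\to\GF(q)$. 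Once the right $c$ is pinned down the remaining identities are a routine check.
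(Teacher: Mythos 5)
Your proposal is correct and is essentially the paper's own argument: both reduce by Corollary~\ref{transcor} to the coordinatised splash $\{(a+b\tau,1,0)\}\cup\{(1,0,0)\}$, identify $\li$ with $\GF(q^3)\cup\{\infty\}$, and exhibit the splash as a ``plane plus $\infty$'' Sherk surface ${\mathscr S}(0,0,L,0)$, relying on Sherk's result that such surfaces have size $q^2+1$ (which also settles the convention at $\infty$). The only cosmetic difference is how $L$ is pinned down: you invoke nondegeneracy of the trace form to take $L$ dual to the $\tau^2$-coordinate functional, while the paper solves the two trace equations coming from the points $1$ and $\tau$ — these are the same conditions on $L$.
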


\begin{proof} We can naturally identify the points of $\li=\PG(1,q^3)$ with elements
  of $\GF(q^3)\cup\{\infty\}$: the point $(a,1,0)\in\li$ corresponds
  to the element $a\in\GF(q^3)$, and the point $(1,0,0)$ corresponds to the
  element $\infty$. Theorem~\ref{transitiveontangentsubplanes} and
  Lemma~\ref{defnBsplash}, we can 
without loss of generality,  take our tangent splash to
  be $\ST=\{(a+b\tau,1,0)\st a,b\in\GF(q)\}\cup\{(1,0,0)\}$. 
The four points $(1,0,0), (0,1,0),(1,1,0), (\tau,1,0)$ are contained in
$\ST$, and are not on an \orsl,  so by Theorem~\ref{nos5-version2-part2}, they uniquely determine $\ST$.

Sherk shows that a plane of
  $\A=\AG(3,q)$ together with $\infty$ is a Sherk surface ${\mathscr
    S}(0,0,L,\phi)$, $L\in\GF(q^3)$, $\phi\in\GF(q)$ of size $q^2+1$. 
This Sherk surface is the set of points satisfying
$$(LX+L^qX^q+L^{q^2}X^{q^2})+\phi=0.$$
We
  show that our tangent splash is one of these planes. 
The  centre of our tangent splash $\ST$ is the point $T=(1,0,0)$, which corresponds to the element $\infty$, which is
on every  Sherk surface that corresponds to a plane in $\AG(3,q)$. The point $(0,1,0)$ corresponds to the
element $0$ of $\GF(q^3)$. This is on the Sherk surface ${\mathscr
    S}(0,0,L,\phi)$ if and only if $\phi=0$. Now consider the points
  $(1,1,0), (\tau,1,0)$ of $\ST$ which correspond to the elements $1,\tau$ of
  $\GF(q^3)$ respectively. These lie on the Sherk surface ${\mathscr
    S}(0,0,L,0)$ if
\begin{eqnarray}
L+L^q+L^{q^2}&=&0,\label{sherk1}\\
L\tau+L^q\tau^q+L^{q^2}\tau^{q^2}&=&0 \label{sherk2}.
\end{eqnarray}
These equations have a unique nonzero solution for $L$, since in $\AG(3,q)$
there is a unique plane through the three points $0,1,\tau$, hence there is a
unique Sherk surface ${\mathscr
    S}(0,0,L,\phi)$ containing these three points. Note this Sherk surface also
  contains $\infty$.

Now $a\times $(\ref{sherk1})$+b\times$(\ref{sherk2}) gives: 
$L(a+b\tau)+L^q(a+b\tau^q)+L^{q^2}(a+b\tau^{q^2})=0.$
If $a,b\in\GF(q)$, then this can be written as
$$L(a+b\tau)+L^q(a+b\tau)^q+L^{q^2}(a+b\tau)^{q^2}=0.$$ Hence the element
$a+b\tau$ of $\GF(q^3)$ 
lies on the Sherk surface ${\mathscr
    S}(0,0,L,0)$. That is, the Sherk
surface containing the elements $\infty,0,1,\tau$ contains all the elements
$a+b\tau$, where
$a,b\in\GF(q)$. So this Sherk surface is equivalent to our tangent splash
$\ST=\{(a+b\tau,1,0)\st a,b\in\GF(q)\}\cup\{(1,0,0)\}$. 
\end{proof}

%{\sf Note: 
%In the Sherk paper we show the converse: that every Sherk surface  of size $q^2+1$ corresponds to
%  a tangent splash. If we go ahead with the Sherk paper, then I propose
%  leaving this as it is, and having this general proof in the Sherk paper}

\section{The tangent subspace at an affine point}\Label{sect:tgt-subspace}

In this section we show that each point $P$ in a tangent  \orsp\ $\pi$ has a natural correspondence to a plane $[P^\perp]$ in $\PG(6,q)$.
Consider an \orsp\ $\Bpi$ in $\PG(2,\r^3)$ tangent to $\li$ at the point $T$.  For
each affine point $P$ of $\Bpi$, 
we construct an \orsp\ $P^\perp$ that contains $P$ and is secant to $\li$ as follows. Let
$\ell_1,\ldots,\ell_{q+1}$ be the $q+1$ lines of $\Bpi$ through
$P$. Then by Lemma~\ref{numberofsublines}(\ref{sublineprojection}),
$m=\{\ell_i\cap\li\st i=1,\ldots,q+1\}$ is an \orsl\ of $\li$
through $T$. Now
$m$ and $PT\cap\Bpi$ are two \orsls\  through $T$, and so lie in a unique
\orsp\ which we denote by $P^\perp$. We show in
Corollary~\ref{tgtsubspacecor} that the \orsps\ $\Bpi$
and $P^\perp$ meet in exactly the \orsl\ $PT\cap\Bpi$.

By Theorem~\ref{FFA-subplane-tangent}, in $\PG(6,q)$, $\Bpi$
corresponds to a ruled surface $\bbb$ and $P^\perp$ corresponds to  a plane $[P^\perp]$ that meets 
$q+1$ elements of the 2-spread $\S$. We will show in
Theorem~\ref{BBPperp} that
$[P^\perp]$ is the tangent space to $\bbb$ at the point $P$. 
This leads us to call $P^\perp$ the {\em tangent space} of $P$.

By Theorem~\ref{transitiveontangentsubplanes} we can without loss of
generality 
let $\Bpi$ be the \orsp\ coordinatised in
Section~\ref{coord-subplane}. 
Consider the point $P_{e,d}$ of $\Bpi$ for some $e,d\in\GF(q)$ and consider
the \orsp\ $P_{e,d}^\perp$. 
We work in $\PG(6,q)$ and in the next lemma determine the coordinates of two points in  $[P_{e,d}^\perp]\cap\si$.
We use the following notation for $e\in\GF(q)$:
\begin{eqnarray*}
\minustheta(e)&=&(e+\tau)^{\r^2+\r}=(e+\tau^q)(e+\tau^{q^2}),\\ 
\plustheta(e)&=&(e+\tau)^{\r^2+\r+1}=(e+\tau)(e+\tau^q)(e+\tau^{q^2}).
\end{eqnarray*}
Note that since $\plustheta(e)^\r=\plustheta(e)$, it follows that
$\plustheta(e)\in\GF(\r)$. We again use the generalised Bruck-Bose map
$\sigma$ defined in Section~\ref{BBintro}.

\begin{lemma}\Label{BBPperp0}
 In $\PG(6,q)$, the plane $[P_{e,d}^\perp]$, $d,e\in\GF(q)$, contains the
 points $J_{e,d}$, $K_{e,d}$ of $\si$, where
$$J_{e,d}=\sigma((1-d)\tau\minustheta(e)^2,\allowbreak\tau\minustheta(e)^2,0),
\quad
K_{e,d}=\sigma((1-d+e+\tau)\tau\minustheta(e)^2,\tau\minustheta(e)^2,0).$$
\end{lemma}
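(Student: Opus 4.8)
The goal is to pin down two explicit points of the plane $[P_{e,d}^\perp]$ inside $\si$, so the natural strategy is to work entirely in $\PG(2,q^3)$ with the coordinates of Table~\ref{table-coords}, identify two affine points of $P_{e,d}^\perp$ on the line $P_{e,d}T$, and then apply the generalised Bruck-Bose map $\sigma$ together with the description of $P^\perp$ from Section~\ref{sect:tgt-subspace}. Recall $P^\perp$ is the unique \orsp\ containing the two \orsls\ through $T$: the \orsl\ $PT\cap\Bpi$, and the \orsl\ $m=t_P=\{\ell_i\cap\li\}$ which is the tangent splash direction at $P$. So the first task is to write down these two \orsls\ concretely for $P=P_{e,d}=(e+d\tau,e,e+\tau)$: the \orsl\ $m$ is the projection from $P$ of the lines of $\Bpi$ through $P$ onto $\li$, and by Lemma~\ref{allextlines} and Lemma~\ref{defnBsplash} this is a subset of $\ST$; I would compute which \orsl\ of $\ST$ it is (it should be the one through $T$ determined by the parameter $e$, since the lines $m_h$ of $\Bpi$ are indexed by $h\in\GF(q)$). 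Then $PT\cap\Bpi$ is the \orsl\ through $T$ and $P_{e,d}$.

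Once those two \orsls\ are identified, the plan is to produce the unique \orsp\ $P^\perp$ through them by writing a homography carrying $\PG(2,q)$ onto $P^\perp$, analogously to the coordinatisation in Section~\ref{coord-subplane} (where $A_1$ maps $\PG(2,q)$ to $\Bpi$). Concretely: $P^\perp$ is secant to $\li$, meeting it in the \orsl\ $m$; two of its lines through $T$ are $m$ extended and the line $P_{e,d}T$; a third \orsl\ through $T$ in $P^\perp$, say $PT\cap\Bpi$, is already known. I would parametrise the points of $P^\perp$ on the line $P_{e,d}T$ and track where this line meets $\li$ — but actually the two points we want, $J_{e,d}$ and $K_{e,d}$, both have last coordinate $0$, so they lie in $\si$, i.e.\ they are points of $[P^\perp]\cap\si = [m]$. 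So $P^\perp\cap\li = m$, and $J_{e,d},K_{e,d}$ should be the $\sigma$-images of two affine points of $\PG(2,q^3)$ on a line of $P^\perp$ that extends to a point of $\li$, i.e.\ the images under $\sigma$ of points $(\alpha,1,0)$-direction scaled by $\delta\in\GF(q^3)$. Reading off the claimed formulas, $J_{e,d}=\sigma((1-d)\tau\,\minustheta(e)^2,\ \tau\,\minustheta(e)^2,\ 0)$ is $\sigma(\delta(1-d),\delta,0)$ with $\delta = \tau\,\minustheta(e)^2$, which is the point of $[m]$ in the spread element $[(1-d,1,0)]$; and $K_{e,d}=\sigma(\delta(1-d+e+\tau),\delta,0)$ lies in the spread element $[(1-d+e+\tau,1,0)]=[\,\alpha_{e,d}\,]$ for a second direction $\alpha_{e,d}\in\ST$. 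So the real content is: (i) verify $(1-d,1,0)$ and $(1-d+e+\tau,1,0)$ are the directions of two specific lines of $P^\perp$, and (ii) compute the exact scalar $\delta$ for which $\sigma(\delta\alpha,\delta,0)$ is the intersection point of $[P^\perp]$ with those two spread elements.

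For step (ii) the mechanism is: in the Bruck-Bose setup, $[P^\perp]$ is a plane meeting $q+1$ spread elements, and given two affine points $A,B$ of $P^\perp$ (in $\PG(2,q^3)$, hence affine points of $\PG(6,q)$), the plane $[P^\perp]$ is $\langle A,B,[\text{stuff at infinity}]\rangle$; the intersection of $[P^\perp]$ with a spread element $[\alpha]$ is found by taking the line through an affine point of $P^\perp$ in the direction $\alpha$ and intersecting with $\si$. So I would take two convenient affine points of $P^\perp$ — natural choices are $P_{e,d}$ itself and a point of $PT\cap\Bpi$ other than $P_{e,d}$ — and then, for the direction $(1-d,1,0)$, find the line of $P^\perp$ through (say) $P_{e,d}$ with that direction at infinity, read off its affine points as $(e+d\tau + \lambda(1-d),\ e+\lambda,\ e+\tau)$-type expressions, normalise, and apply $\sigma$ to get a point of the form $\sigma(\delta(1-d),\delta,0)$. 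Matching the $\delta$ against $\tau\,\minustheta(e)^2 = \tau(e+\tau^q)^2(e+\tau^{q^2})^2$ is the crux — note $\minustheta(e)^2 = ((e+\tau)^{q^2+q})^2$ and $\tau\minustheta(e)^2$ has a clean interpretation as $\tau(e+\tau)^{2(q^2+q)}$, so $\sigma$ applied to $\delta\cdot(\text{direction})$ requires expanding $\delta$ in the basis $1,\tau,\tau^2$ over $\GF(q)$, which is where the polynomial $x^3-t_2x^2-t_1x-t_0$ enters.

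\textbf{Main obstacle.} The genuinely hard part is the explicit computation in (ii): determining the precise scalar $\delta=\tau\,\minustheta(e)^2$. Identifying the directions $(1-d,1,0)$ and $(1-d+e+\tau,1,0)$ as lying in $P^\perp\cap\li$ is straightforward bookkeeping from Table~\ref{table-coords} and Lemma~\ref{defnBsplash} (the second direction differs from the first by $e+\tau$, which smells like the parameter of the line $m_e$ through $T$ in $\Bpi$). But extracting the scaling factor $\delta$ — equivalently, finding the exact affine point where the line of $P^\perp$ in a given direction meets $[m]$ — forces one through a genuine $\GF(q^3)$-computation: one must construct $P^\perp$ as a concrete subplane (via a homography, as with $A_1$ in Section~\ref{coord-subplane}), identify its affine points on the relevant lines, and simplify products like $(e+\tau^q)(e+\tau^{q^2})$ using $\tau^{q}, \tau^{q^2}$ being the conjugate roots. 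I expect the appearance of $\minustheta(e)^2$ (a square, not $\minustheta(e)$ itself) to be the subtle point, arising because the homography defining $P^\perp$ involves norm-type expressions in two stages — once for the \orsl\ $m\subset\ST$ and once for fitting $PT\cap\Bpi$. I would organise the calculation by first handling the case $e=0$ (where $\minustheta(0)=\tau^{q^2+q}=\tau^{-1}\cdot\tau^{q^2+q+1}$ and the formulas collapse) as a sanity check, then do the general $e$.
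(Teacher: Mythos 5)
Your framework is sound and, at bottom, it is the same mechanism the paper uses: a point of $[P_{e,d}^\perp]\cap\si$ is obtained as the point at infinity of a line of $\PG(6,q)$ joining two affine points of $P_{e,d}^\perp$ that lie on a common \orsl\ of $P_{e,d}^\perp$ tangent to $\li$ (Theorem~\ref{sublinesinBB}), and your guessed directions $(1-d,1,0)$ and $(1-d+e+\tau,1,0)$ are indeed $V_{e,d,0}=\ell_{e,d,0}\cap\li$ and $V_{e,d,1}=\ell_{e,d,1}\cap\li$. The problem is that the entire content of the lemma is the explicit scalar (your $\delta=\tau\minustheta(e)^2$), and that is precisely the step you defer: knowing the direction of a line of $P_{e,d}^\perp$ does not locate the point of $[P_{e,d}^\perp]$ in the corresponding spread element --- you need a second explicit affine point of $P_{e,d}^\perp$ on that line, and the only route you offer (coordinatise $P_{e,d}^\perp$ by a homography analogous to $A_1$) is not carried out. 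So as it stands this is a plan, not a proof. Two smaller slips: the splash subline $t_{P_{e,d}}=P_{e,d}^\perp\cap\li$ is not ``determined by the parameter $e$'' --- it is $\{T\}\cup\{(1-d+f(e+\tau),1,0)\st f\in\GF(q)\}$, so it depends on $d$ as well; and $[P^\perp]\cap\si$ is a line transversal to the regulus $[t_P]$, not the regulus itself.

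For comparison, the paper never constructs $P_{e,d}^\perp$ explicitly; it exploits closure of the subplane under joins and meets. From the four known points $P_{e,d},P_{e,0}$ (on $m_e\cap\Bpi$) and $V_{e,d,0},V_{e,d,1}$ (on $t_{P_{e,d}}$) it forms, for $d\neq0$, the diagonal point $X_{e,d}=P_{e,d}V_{e,d,1}\cap P_{e,0}V_{e,d,0}=(e^2+(e-d+d^2)\tau,\,e^2+(e-d)\tau,\,(e+\tau)^2)$, which lies in $P_{e,d}^\perp$. Then $P_{e,0}X_{e,d}$ and $P_{e,d}X_{e,d}$ are \orsls\ of $P_{e,d}^\perp$ tangent to $\li$, hence lines of the plane $[P_{e,d}^\perp]$ in $\PG(6,q)$; after multiplying coordinates by $\minustheta(e)^2$ so that the third coordinate becomes $\plustheta(e)^2\in\GF(q)$ (this renormalisation of the third coordinate $(e+\tau)^2$ of $X_{e,d}$ is the sole source of the square $\minustheta(e)^2$, not a two-stage norm computation as you speculate), one checks $X_{e,d}=P_{e,0}-dJ_{e,d}$ and $X_{e,d}=P_{e,d}-dK_{e,d}$, so the two lines meet $\si$ exactly in $J_{e,d}$ and $K_{e,d}$; a separate short computation handles $d=0$. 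Your homography route would eventually produce the same scalars, but it amounts to redoing the coordinatisation of Section~\ref{coord-subplane} for each $(e,d)$, and your outline does not address degenerate parameter values (the paper must split off $d=0$); the diagonal-point trick is what makes the computation short, and without it, or the equivalent explicit construction, the lemma is not yet proved.
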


\begin{proof}
In $\PG(2,q^3)$, the \orsp\ $P_{e,d}^\perp$ contains all the affine points
of the \orsl\ $m_e=TP_{e,d}\cap\Bpi$. 
Firstly, suppose that $d\ne 0$, so $P_{e,d}$ and $P_{e,0}$ are distinct
points of $m_e$.  Consider the two lines $\ell_{e,d,0}$ and $\ell_{e,d,1}$ through
$P_{e,d}$.
Using Table~\ref{table-coords} we calculate
$V_{e,d,0}=\ell_{e,d,0}\cap\li=(-d+1,1,0)$ and 
$V_{e,d,1}=\ell_{e,d,1}\cap\li=(e-d+1+\tau,1,0)$.
Now $P_{e,d},P_{e,0},V_{e,d,0},V_{e,d,1}$ are all distinct points of $P_{e,d}^\perp$,
so $X_{e,d}=P_{e,d}V_{e,d,1}\cup P_{e,0}V_{e,d,0}$ is in $P_{e,d}^\perp$. 
Straightforward calculation shows
$X_{e,d}=(e^2+(e-d+d^2)\tau,e^2+(e-d)\tau,(e+\tau)^2)$.
Note that the lines
$P_{e,0}X_{e,d}$, $P_{e,d}X_{e,d}$ meet
$\li$ in points of the \orsp\ $P_{e,d}^\perp$.
The \orsl\ of $P_{e,d}^\perp$ through $P_{e,d}$ and $X_{e,d}$ contains the 
point $V_{e,d,1}$ of $\li$, so by Theorem~\ref{sublinesinBB}, corresponds in $\PG(6,q)$ to the line
through $P_{e,d}$ and $X_{e,d}$. Hence in $\PG(6,q)$, the point
$P_{e,d}X_{e,d}\cap\si$ lies in the plane $[P_{e,d}^\perp]$. Similarly the
point $P_{e,0}X_{e,d}\cap\si$ lies in $[P_{e,d}^\perp]$. 
To calculate the coordinates of these points in $\PG(6,q)$, we need to take
the coordinates of 
 $X_{e,d},P_{e,0},P_{e,d}$ in $\PG(2,q^3)$ and write them
with third coordinate in $\GF(q)$. So using the Bruck-Bose map $\sigma$, we
have in $\PG(6,q)$,
\begin{eqnarray*}
X_{e,d}&=&\sigma((e^2+(e-d+d^2)\tau)\minustheta(e)^2,(e^2+(e-d)\tau)\minustheta(e)^2,\plustheta(e)^2),\\
P_{e,0}&=&\sigma(e(e+\tau)\minustheta(e)^2,e(e+\tau)\minustheta(e)^2,\plustheta(e)^2),\\
P_{e,d}&=&\sigma((e+d\tau)(e+\tau)\minustheta(e)^2,e(e+\tau)\minustheta(e)^2,\plustheta(e)^2).
\end{eqnarray*}
Now $X_{e,d}=P_{e,0}-dJ_{e,d}$,
hence the line $P_{e,0}X_{e,d}$ of $\PG(6,q)$ meets $\si$ in the point
$J_{e,d}$, so $J_{e,d}$ is in the plane $[P_{e,d}^\perp]$.
Similarly, $X_{e,d}= P_{e,d}-dK_{e,d}$,
hence the line $P_{e,d}X_{e,d}$ of $\PG(6,q)$ meets $\si$ in the point
$K_{e,d}$, so $K_{e,d}$ is in the plane $[P_{e,d}^\perp]$.

Now suppose $d=0$. In this case, let $X_e=P_{e,1}V_{e,0,1}\cap
P_{e,0}V_{e,0,0}$.
Straightforward calculation gives
\begin{eqnarray*}
X_e&=&(e^2+e\tau-\tau,e^2+e\tau-\tau,(e+\tau)^2)\\
&=&((e^2+e\tau-\tau)\plustheta^{-}(e)^2,(e^2+e\tau-\tau)\plustheta^{-}(e)^2,\plustheta(e)^2).
\end{eqnarray*}
%\\ {\sf reminder note to us: can't just put $d=0$ in the formula for $X_{e,d}$.}\\
Then in $\PG(6,q)$,  $X_e=
P_{e,0}- J_{e,0}$ and $X_e=
P_{e,1}- K_{e,0}$. So $J_{e,0},K_{e,0}\in
[P_{e,d}^\perp]$
as required. 
\end{proof}

\begin{theorem}\Label{BBPperp}
Let $\Bpi$ be a tangent \orsp\ in $\PG(2,q^3)$ and 
let $\bbb$ be the corresponding ruled surface in $\PG(6,q)$. By
Theorem~{\rm \ref{FFA-subplane-tangent}}, 
$\bbb$ is the intersection of nine quadrics.  Let $P$ be an affine point of
$\Bpi$, then in
$\PG(6,q)$, the plane $[P^\perp]$ is the intersection of the tangent
spaces at $P$ to each of these nine quadrics. 
%Further, $[P^\perp]$
%contains the generator line of $\bbb$ through $P$. 
\end{theorem}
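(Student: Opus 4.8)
The plan is to show that the plane $[P^\perp]$, which we already know (from Theorem~\ref{FFA-subplane-tangent}(1)) meets $q+1$ spread elements, lies inside the tangent space at $P$ of each of the nine quadrics whose intersection is $\bbb$, and then conclude equality by a dimension count. Since $\langle\CapitalTheta\rangle$ and the stabiliser group of Lemma~\ref{grouplemma}(\ref{G9}) act on the configuration, it suffices to prove this for a single point $P=P_{e,d}$ of the coordinatised subplane $\Bpi$, with $[P^\perp]$ concretely given. By Lemma~\ref{BBPperp0} I have explicit coordinates for two points $J_{e,d}, K_{e,d}$ of $[P^\perp]\cap\si$, and the affine point $P_{e,d}$ itself is the third generating point of $[P^\perp]$; thus $[P^\perp]=\langle P_{e,d}, J_{e,d}, K_{e,d}\rangle$. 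The tangent space to a quadric $Q$ with symmetric matrix $M$ at a point $P$ is the hyperplane $\{X : P^t M X = 0\}$ (of dimension $5$, since $P\in Q$), so the intersection of the nine tangent hyperplanes is a subspace of dimension $\ge 6-9$; I must show it has dimension exactly $2$ and equals $[P^\perp]$.

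The key steps, in order: (i) record the nine quadrics cutting out $\bbb$ — these come from \cite{barw12}, and I would quote their defining matrices explicitly in the chosen coordinates; (ii) for the point $P=P_{e,d}$ with coordinates $\sigma((e+d\tau)(e+\tau)\minustheta(e)^2, e(e+\tau)\minustheta(e)^2, \plustheta(e)^2)$, compute the nine tangent hyperplanes $H_i = \{X : P^t M_i X = 0\}$; (iii) verify by direct substitution that $P$, $J_{e,d}$, and $K_{e,d}$ all satisfy every equation $P^t M_i X = 0$ — that $P$ does is automatic since $P\in\bbb\subset Q_i$, and for $J_{e,d}, K_{e,d}$ this is a finite algebraic check in $\GF(q^3)$ using $\minustheta, \plustheta$ identities; hence $[P^\perp]=\langle P, J_{e,d}, K_{e,d}\rangle \subseteq \bigcap_i H_i$; (iv) show the reverse inclusion by proving $\dim\bigcap_i H_i = 2$, i.e. the $9\times 7$ matrix whose rows are $P^t M_i$ has rank $5$. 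Here I would either compute the rank directly, or argue geometrically: $\bigcap_i H_i$ is the tangent space to $\bbb$ at the smooth point $P$, and $\bbb$ (being a ruled surface with a conic and a twisted cubic directrix, projecting to an affine plane of points) is a surface, so its tangent space at a smooth point is a plane — one still needs that $P$ is a smooth point of the scheme-theoretic intersection, which the rank computation confirms.

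**The hard part will be** step (iv): verifying that the tangent spaces of the nine quadrics meet in exactly a plane rather than something larger, equivalently that $P$ is a smooth point of the quadric intersection and $\bbb$ is genuinely $2$-dimensional there. The nine quadrics are highly non-transverse (their common zero locus is only a surface in $\PG(6,q)$, far from the naive codimension $9$), so the rank-$5$ claim is not formal; it requires either the explicit rank computation with the $\minustheta(e), \plustheta(e)$ bookkeeping, or invoking a structural fact about $\bbb$ from \cite{barw12} — that $\bbb$ is a smooth rational ruled surface whose tangent plane at each affine point is well-defined. Once that is in hand, $[P^\perp]\subseteq\bigcap_i H_i$ together with both sides having dimension $2$ forces equality, completing the proof. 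A secondary nuisance is the case split $d=0$ versus $d\ne 0$ inherited from Lemma~\ref{BBPperp0}, but the computation is parallel in both cases.
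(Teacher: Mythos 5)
Your overall plan (reduce by transitivity to the coordinatised point $P_{e,d}$, use the two points $J_{e,d},K_{e,d}$ of Lemma~\ref{BBPperp0}, note $[P^\perp]=\langle P_{e,d},J_{e,d},K_{e,d}\rangle$, and compare with the intersection of the tangent spaces) is the same computation as the paper's, just packaged as ``containment plus dimension count'' instead of solving directly for all tangent lines through $P$. But two steps as written would fail. First, the rank bookkeeping is wrong: in $\PG(6,q)$ a plane is a $3$-dimensional vector subspace of the underlying $7$-dimensional space, so $\dim\bigcap_i H_i=2$ means the $9\times 7$ matrix of tangent-hyperplane forms has rank $4$, not $5$; indeed, once you have $\langle P_{e,d},J_{e,d},K_{e,d}\rangle\subseteq\bigcap_iH_i$, rank $5$ is impossible, so step (iv) as stated aims at a false target. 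Second, your proposed geometric shortcut for (iv) --- ``$\bbb$ is a surface, so its tangent space at a smooth point is a plane'' --- is circular: the whole issue is whether the linear space cut out by the nine tangent conditions at $P$ has dimension $2$, which is exactly the rank statement; as you half-concede, nothing short of the explicit linear algebra settles it, so the plan ultimately defers the only substantive step. A further technical snag: describing the tangent space as $\{X: P^tM_iX=0\}$ with $M_i$ symmetric breaks down for $q$ even; one needs the gradient/double-root formulation of tangency, which is valid in every characteristic.

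The paper's proof is precisely the executed version of your step (iv), with lighter bookkeeping: instead of nine quadric matrices over $\GF(q)$ it works with the three defining equations over $\GF(q^3)$ (one of which is redundant), writes a general line through $P_{e,d}$ as $P_{e,d}+tR$ with $R=\sigma(u,v,0)$, and requires $t=0$ to be a double root. Each of the two surviving equations becomes the condition that a fixed $\GF(q^3)$-linear expression in $u,v$ lies in $\GF(q)$, i.e.\ two $\GF(q)$-linear conditions, four in all --- this is the rank-$4$ computation --- and solving them shows $R$ runs exactly over the line joining $J_{e,d}$ to $C_e=\sigma(\tau\minustheta(e),0,0)$, so the tangent lines through $P_{e,d}$ fill the plane $\langle P_{e,d},J_{e,d},C_e\rangle$, which equals $[P_{e,d}^\perp]$ by Lemma~\ref{BBPperp0} since $K_{e,d}=J_{e,d}+\plustheta(e)C_e$. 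If you correct the rank to $4$, drop the circular geometric alternative, and use the double-root definition of tangency (or the $\GF(q^3)$ form of the equations), your outline becomes essentially the paper's argument.
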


\begin{proof} As before, without loss of generality we prove this for the
  tangent \orsp\ $\Bpi$ coordinatised in Section~\ref{coord-subplane}. 
The equations of the nine quadrics determining $\bbb$ are given in
\cite{barw12} in equations (16), (17), (18), and are: 
\begin{eqnarray}
(1-x)^\r((1+\w)y-1)-(1-x)((1+\w)y-1)^\r&=&0,\label{eqn2forsigma1}\\
(1-x)(\w y)^\r-(1-x)^\r\w y=0,\label{eqn3forsigma1}\\
(1-y)^\r(1-x)-(1-y)(1-x)^\r=0.\label{eqn1forsigma2}
\end{eqnarray}
These equations are in $\PG(2,q^3)$, and each equation corresponds to three
quadrics in $\PG(6,q)$. A point $(x,y,1)$ of
$\PG(2,q^3)$ 
satisfying one of these equations corresponds to a point $\sigma(x,y,1)$ in
$\PG(6,q)$
lying on all three quadrics. 
Note that equation (\ref{eqn2forsigma1}) is equation (\ref{eqn1forsigma2})
minus equation
(\ref{eqn3forsigma1}), so we only need consider the second two equations. 

By Lemma~\ref{Gextsubgp}, without loss of generality we can
prove the result for the affine  point $P_{e,d}$ in $\Bpi$ for some $e,d\in\GF(q)$. 
In $\PG(6,q)$ we want to find all lines through $P_{e,d}$ tangent to each
of the nine quadrics. 
A line $\ell$ through the point $P_{e,d}$ meets $\si$ in a point $R=\sigma(u,v,0)$
where $u,v\in\GF(q^3)$ depend on $e,d$. A general point on the line $\ell$ has form
$P_{e,d}+tR$, where $t\in\GF(\r)$.  To determine which lines $\ell$ are tangent
lines, we substitute this general point into
the equations for the quadrics and solve for points $R$ where $t=0$ is a double
root. 

From Table~\ref{table-coords} we have $P_{e,d}=(e+d\tau,e,e+\tau)$.
Writing this as $(x,y,1)$ gives
\begin{equation}\label{coordsxy}
x=(e+d\tau)\ee, \ y= e\ee,\ 
1-x=(1-d)\tau\ee,\ 1-y=\tau\ee.
\end{equation}
Replacing $x$ with $x+tu$ and $y$ with $y+tv$ in (\ref{eqn3forsigma1}) we obtain
\begin{eqnarray*}
0&=&(1-x-tu)(\tau(y+tv))^\r-(1-x-tu)^\r\tau(y+tv)\\
&=&(1-x)(\w y)^\r-(1-x)^\r\w y+t^2(-u\tau^\r v^\r+u^\r\tau v)+\\
&&\quad\quad +t(-u\tau^\r y^\r+(1-x)\tau^\r v^\r-(1-x)^\r\tau v+u^\r\tau y).
\end{eqnarray*}
Note that $P_{e,d}=(x,y,1)$ satisfies (\ref{eqn3forsigma1}), so when the
above equation is regarded as a polynomial in $t$, the constant term is
equal to 0.  For $t=0$ to be a repeated root, we need the coefficient of
$t$ to be equal to zero, that is $-u\tau^\r y^\r+(1-x)\tau^\r v^\r-(1-x)^\r\tau v+u^\r\tau y=0$.
Substituting for $x,y$ using (\ref{coordsxy}) yields
\begin{eqnarray*}
0&=&-u\tau^\r e\eeq+(1-d)\tau\ee\tau^\r v^\r-(1-d)\tau^\r\eeq\tau v+u^\r\tau e\ee.
\end{eqnarray*}
Rearranging gives
\begin{eqnarray*}
\frac 1{\tau\minustheta(e)}\left((1-d)\tau v+eu\right)&=&
\left(\frac 1{\tau\minustheta(e)}\left((1-d)\tau v+eu\right)\right)^\r,\\
\end{eqnarray*}
and so $\left((1-d)\tau v+eu\right)/\tau\minustheta(e)$ is in
$\GF(\r)$. So  we want  $u,v\in\GF(q^3)$ such that $\left((1-d)\tau
  v+eu\right)/\tau\minustheta(e)=a$, for any $a\in\GF(q)$. That is, 
\begin{eqnarray}
 (1-d)\tau v+eu&=&a\tau\minustheta(e)\label{firstped}
\end{eqnarray}
for $a\in\GF(\r)$.
We now repeat these calculations for
equation (\ref{eqn1forsigma2}): we replace $x$ by $x+tu$ and $y$ by $y+tv$ to obtain
\[
(1-y-tv)^\r(1-x-tu)-(1-y-tv)(1-x-tu)^\r=0,
\]
setting the coefficient of $t$  to 0 gives
\[
-v^\r(1-x)-(1-y)^\r u+(1-y)u^\r+v(1-x)^\r=0,
\]
and substituting for $x,y$ from (\ref{coordsxy}) gives $((1-d)v-u)/\tau
\minustheta(e)$ is an element of $\GF(q)$. So we want $u,v\in\GF(q^3)$ such that 
\begin{eqnarray}
(1-d)v-u=b\tau \minustheta(e)\label{secondped}
\end{eqnarray}
for any $b\in\GF(q)$. 
Solving (\ref{firstped}) and (\ref{secondped}) for $u,v$ gives
$$u=\frac 1{\plustheta(e)}(a+be)\tau\minustheta(e)^2-b\tau\minustheta(e),\quad v=\frac
1{1-d}\frac 1{\plustheta(e)}(a+be)\tau\minustheta(e)^2,$$
where $a,b$ are any values in $\GF(q)$, that is, $u,v$ are functions of $a,b\in\GF(q)$.
  Hence the points $R$ for which $P_{e,d}R$ is a tangent line to the nine quadrics
forming the variety $\bbb$ are, for any $a,b\in\GF(q)$, 
\begin{eqnarray*}
R&=&\sigma(u,v,0)\\
&=& \left(\frac{a+be}{(1-d)\plustheta(e)}\right) \sigma\left((1-d)\tau\minustheta(e)^2,\tau\minustheta(e)^2,0\right)-b\sigma(\tau\minustheta(e),0,0).
\end{eqnarray*}
Note that $(a+be)/((1-d)\plustheta(e))\in\GF(q)$. By varying
$a,b\in\GF(q)$, we see that $R$ is any point on the line
joining $J_{e,d}=\sigma((1-d)\tau\minustheta(e)^2,\tau\minustheta(e)^2,0)$ and $C_e=\sigma(\tau\minustheta(e),0,0)$.
Hence the lines through $P_{e,d}$ that are tangent to $\bbb$ form a plane
$\alpha=\langle P_{e,d},J_{e,d},C_e\rangle$. 
By Lemma~\ref{BBPperp0}, the point
$J_{e,d}$ is in the plane $[P_{e,d}^\perp]$.
Further, the point $K_{e,d}=J_{e,d}+\plustheta(e)C_e$ is in $\alpha$
as $\plustheta(e)\in\GF(q)$, and is in $[P_{e,d}^\perp]$ by Lemma~\ref{BBPperp0}.
Hence $\alpha=[P_{e,d}^\perp]$, that is, the plane $[P_{e,d}^\perp]$ is formed from the lines tangent to $\bbb$ at $P_{e,d}$.
\end{proof}

\begin{corollary}\Label{tgtsubspacecor}
Let $\Bpi$ be an \orsp\ of $\PG(2,q^3)$ tangent to $\li$ at the point $T$, and let $P$ be an
affine point of $\Bpi$. Then the \orsps\
$\Bpi$ and $P^\perp$ meet in exactly the \orsl\
$PT\cap\Bpi$. Further, there is a bijection from affine points $P$ of $\Bpi$ to
\orsps\ $P^\perp$ that contain the \orsl\ $PT\cap\Bpi$ and meet $\li$ in an \orsl\ contained in $\ST$. 
\end{corollary}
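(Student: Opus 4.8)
The plan is to treat the two assertions of the corollary in turn. Write $n_P:=PT\cap\Bpi$ for the \orsl\ of $\Bpi$ through $T$ and $P$, and $t_P$ for the \orsl\ of $\li$ through $T$ that is the splash of $P$ in $\Bpi$ (the set called $m$ in the construction of $P^\perp$), so that by construction $P^\perp=\langle n_P,t_P\rangle$ with $n_P\cap t_P=\{T\}$. Since $P^\perp$ contains the $q+1$ points of $t_P\subseteq\li$, the line $\li$ is a line of $P^\perp$ and $P^\perp\cap\li=t_P$; in particular $P^\perp$ is secant to $\li$. For the first assertion the inclusion $n_P\subseteq\Bpi\cap P^\perp$ is immediate ($n_P$ is a line of $\Bpi$, and $n_P\subseteq P^\perp$ by construction) and $\Bpi\cap\li=\{T\}\subseteq n_P$, so it remains to show there is no \emph{affine} point $Q\in(\Bpi\cap P^\perp)\setminus n_P$.

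Suppose such a $Q$ exists; note $Q\neq P$ since $P\in n_P$. Every line of $P^\perp$ through $Q$ meets $\li$ in a point of $P^\perp\cap\li=t_P$, and the $q+1$ such lines realise the $q+1$ points of $t_P$ bijectively. Two of these lines are $QT$ and $PQ$; setting $V:=PQ\cap\li$ (so $V\in t_P$, and $V\neq T$ because $Q\notin n_P$), the remaining $q-1$ lines of $P^\perp$ through $Q$ are the lines $QV'$ with $V'\in t_P\setminus\{T,V\}$. Since $PQ$ is a line of $\Bpi$ we have $V\in t_P\cap t_Q$, while $t_P\neq t_Q$ by the bijection in Lemma~\ref{numberofsublines}(\ref{sublineprojection}) ($Q\neq P$); hence $t_P\cap t_Q=\{T,V\}$, no $V'$ lies in $t_Q$, and therefore each line $QV'$ meets $\Bpi$ only in $Q$. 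Consequently $\Bpi\cap P^\perp\subseteq(QT\cap P^\perp)\cup(PQ\cap P^\perp)$. On the line $PQ$ the \orsl\ $PQ\cap P^\perp$ contains the three distinct points $P,Q,V$, whereas $PQ\cap\Bpi$ has all of its points affine (its only possible point of $\li$ is $V\notin\Bpi$); so these two \orsls\ of $PQ$ are distinct and hence meet in exactly $\{P,Q\}$. Therefore $\Bpi\cap P^\perp\subseteq(QT\cap P^\perp)\cup\{P\}$, so all $q$ points of $n_P\setminus\{P\}$ lie on $QT\cap P^\perp$. But $n_P$ lies on the line $PT$ and $QT\cap P^\perp$ lies on the line $QT$, and $PT\neq QT$ (otherwise $Q\in PT\cap\Bpi=n_P$); these distinct lines meet only in $T$, so the \orsls\ $n_P$ and $QT\cap P^\perp$ share at most the point $T$, giving $q=|n_P\setminus\{P\}|\leq1$, a contradiction. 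This proves $\Bpi\cap P^\perp=n_P=PT\cap\Bpi$.

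For the second assertion, $P^\perp$ contains $PT\cap\Bpi$ by construction, and we have already seen that $P^\perp$ meets $\li$ in the \orsl\ $t_P$, which lies in $\ST$ by Lemma~\ref{numberofsublines}(\ref{sublineprojection}). Moreover $P\mapsto t_P$ is a bijection from the affine points of $\Bpi$ onto the \orsls\ of $\ST$ through $T$ (again Lemma~\ref{numberofsublines}(\ref{sublineprojection})), and $P^\perp$ determines $t_P=P^\perp\cap\li$; hence $P\mapsto P^\perp$ is injective and so is a bijection from the affine points of $\Bpi$ onto the family of \orsps\ $P^\perp$ obtained in this way, each of which contains $PT\cap\Bpi$ and meets $\li$ in an \orsl\ contained in $\ST$.

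The main obstacle is the exclusion step in the first assertion; everything else is routine bookkeeping with the construction of $P^\perp$ and Lemma~\ref{numberofsublines}. The decisive inputs there are that two distinct \orsls\ lying on a common line of $\PG(2,q^3)$ meet in at most two points (equivalently, that $\PGL(2,q^3)$ is sharply $3$-transitive), together with elementary incidence in $\PG(2,q^3)$. Alternatively one could argue in $\PG(6,q)$ using Theorem~\ref{BBPperp}, identifying $\Bpi\cap P^\perp$ with the section of the ruled surface $\bbb$ by its tangent plane at $P$; but then one has to show that the residual component of this plane section contributes no further $\GF(q)$-rational affine points, which seems less transparent than the argument above.
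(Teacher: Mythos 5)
Your proof is correct, but it takes a genuinely different route from the paper. The paper disposes of the first assertion by appealing to the coordinate computation in the proof of Theorem~\ref{BBPperp}: there $[P^\perp]$ is identified, in the Bruck--Bose setting of $\PG(6,q)$, with the plane of tangent lines at $P$ to the ruled surface $\bbb$, and the intersection statement is read off from that calculation; the bijection is then attributed to Lemma~\ref{numberofsublines}. You instead give a self-contained synthetic argument inside $\PG(2,q^3)$: assuming an affine point $Q\in(\Bpi\cap P^\perp)\setminus(PT\cap\Bpi)$, you sort the lines of $P^\perp$ through $Q$ by their traces on $t_P=P^\perp\cap\li$, use the bijection $P\mapsto t_P$ of Lemma~\ref{numberofsublines}(\ref{sublineprojection}) and the fact that two distinct \orsls\ of a common line share at most two points to show all such lines except $QT$ and $PQ$ meet $\Bpi$ only in $Q$, and then force $n_P\setminus\{P\}$ onto the single point $PT\cap QT=T$, a contradiction. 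I checked the incidence steps (in particular that $Q\notin PT$, that $PQ\cap P^\perp$ contains $V$ while $PQ\cap\Bpi$ is entirely affine, and that $t_P\cap t_Q=\{T,V\}$) and they all hold. What your approach buys is independence from the coordinatisation and from the nine-quadric/tangent-space machinery of Section~\ref{sect:tgt-subspace}, so the corollary could in fact be stated and proved immediately after Section~\ref{sect:sublines}; what the paper's approach buys is that the intersection fact comes essentially for free once Theorem~\ref{BBPperp} has been established, together with the extra geometric information that $[P^\perp]$ is the tangent plane to $\bbb$ at $P$. For the second assertion your reading (injectivity of $P\mapsto P^\perp$, via $P^\perp\cap\li=t_P$ and Lemma~\ref{numberofsublines}(\ref{sublineprojection})) is a fair rendering of the loosely worded statement and is no less complete than the paper's one-line citation.
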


\begin{proof} The proof of Theorem~\ref{BBPperp} shows that $\Bpi$ and
  $P^\perp$ meet in the \orsl\ $PT\cap\Bpi$. The bijection follows from
  Lemma~\ref{numberofsublines}(\ref{tgtsplash2pts}).
\end{proof}

The next corollary states a result from the final paragraph of the proof of Theorem~\ref{BBPperp}. This result is needed in another article. 

\begin{corollary} $[P_{e,d}^\perp]=\langle P_{e,d},J_{e,d},C_e\rangle$. 
\end{corollary}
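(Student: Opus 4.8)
The plan is to observe that this identity has in effect already been proved: it is exactly the conclusion reached in the final paragraph of the proof of Theorem~\ref{BBPperp}, so essentially no new work is required. Recall that in that proof we set $\alpha=\langle P_{e,d},J_{e,d},C_e\rangle$, where $C_e=\sigma(\tau\minustheta(e),0,0)$, and showed — by substituting the parametrised general point of a line through $P_{e,d}$ into each of the nine quadrics cutting out $\bbb$ and forcing $t=0$ to be a double root — that the tangent lines to $\bbb$ at $P_{e,d}$ are precisely the lines through $P_{e,d}$ meeting $\si$ in a point of the line $\langle J_{e,d},C_e\rangle$; hence these tangent lines fill out the plane $\alpha$, and $\alpha$ is the tangent plane to $\bbb$ at $P_{e,d}$.

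Next I would invoke Lemma~\ref{BBPperp0}, which gives two points $J_{e,d}$ and $K_{e,d}$ of $\si$ lying in $[P_{e,d}^\perp]$, together with the relation $K_{e,d}=J_{e,d}+\plustheta(e)C_e$ with $\plustheta(e)\in\GF(q)$ noted in the proof of Theorem~\ref{BBPperp}. This shows simultaneously that $K_{e,d}\in\alpha$ and that $\langle J_{e,d},K_{e,d}\rangle=\langle J_{e,d},C_e\rangle=\alpha\cap\si$, so that $\alpha$ and $[P_{e,d}^\perp]$ are planes sharing the line $\langle J_{e,d},K_{e,d}\rangle$ of $\si$. Since $P_{e,d}$ is an affine point of $P_{e,d}^\perp$, it also lies in $[P_{e,d}^\perp]$; thus $\alpha$ and $[P_{e,d}^\perp]$ are two planes both containing the affine point $P_{e,d}$ and both meeting the hyperplane $\si$ in the same line, and hence coincide. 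This yields $[P_{e,d}^\perp]=\langle P_{e,d},J_{e,d},C_e\rangle$, as claimed.

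The only obstacle is bookkeeping rather than mathematics: one must make explicit the chain tangent-space computation $\Rightarrow \alpha$, Lemma~\ref{BBPperp0} $\Rightarrow J_{e,d},K_{e,d}\in[P_{e,d}^\perp]$, and the elementary fact that two planes of $\PG(6,q)$ agreeing on an affine point and on a line of $\si$ must be equal. No further calculation beyond what already appears in the proof of Theorem~\ref{BBPperp} is needed, so the proof of the corollary is essentially a one-line reference back to that proof.
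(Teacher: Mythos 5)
Your proposal is correct and matches the paper's treatment: the paper gives no separate proof, presenting the corollary as exactly the conclusion of the final paragraph of the proof of Theorem~\ref{BBPperp}, where $\alpha=\langle P_{e,d},J_{e,d},C_e\rangle$ is identified with $[P_{e,d}^\perp]$ via Lemma~\ref{BBPperp0} and the relation $K_{e,d}=J_{e,d}+\plustheta(e)C_e$. Your only addition is to spell out the (correct and elementary) final step that two planes sharing the affine point $P_{e,d}$ and the line $\langle J_{e,d},K_{e,d}\rangle$ of $\si$ must coincide, which the paper leaves implicit.
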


%The \orsl\ $P^\perp\cap\li$ corresponds to a 2-regulus $\R$ of $\S$
%  containing $[T]$. The line $[P^\perp]\cap\si$ also determines this
%  2-regulus.  By Theorem~\ref{splash-coversets-T4}, there is a
%  unique cover plane of the splash that contains the line $[P^\perp]\cap\si$.
% This cover plane will be of interest later. 
%\\{\sf do we need this comment? not sure where it is of interest}

\section{Conclusion}

This article explored further properties of a tangent \orsp\ $\Bpi$ in
$\PG(2,\r^3)$.  In particular, we investigated the \emph{tangent splash} of $\Bpi$
(the intersection of the lines of $\Bpi$ with $\li$).
Most of this investigation was carried out in the plane
$\PG(2,q^3)$. However, the tangent splash has interesting properties when
looked at in the Bruck-Bose representation of $\PG(2,q^3)$ in
$\PG(6,q)$. In particular, the cover planes are of interest, and we
investigated the tangent subspace of an affine point of $\Bpi$. 
Moreover, the relationship between tangent splashes and linear sets is particularly interesting, and known results about linear sets can be interpreted in this setting.

In another article \cite{barwtgt2}, the authors continue this investigation
by working in $\PG(6,q)$ and looking at the interaction between the ruled
surface corresponding to $\Bpi$ and the set of planes forming the tangent
splash of $\Bpi$.

{\bf Acknowledgment} We would like to thank the anonymous referees. One for pointing out the important relationship between tangent splashes and linear sets, and that a number of our results could be proved more directly using the theory of linear sets. The other for helpful suggestions regarding Section 4, and for noting that the affine plane of Theorem~\ref{splash-is-affine-plane} is Desarguesian.

\end{document}